\definecolor{lightblue}{rgb}{0,0.2,0.5}
\definecolor{dblackcolor}{rgb}{0.0,0.0,0.0}
\definecolor{dbluecolor}{rgb}{0.01,0.02,0.7}
\definecolor{dgreencolor}{rgb}{0.2,0.4,0.0}
\definecolor{dgraycolor}{rgb}{0.30,0.3,0.30}
\definecolor{ForestGreen}{RGB}{34,139,34}
\definecolor{mauve}{rgb}{0.7,0,0.43}
\definecolor{dkgreen}{rgb}{0,0.6,0}
\definecolor{darkgreen}{rgb}{0,0.6,0}
\definecolor{darkorange}{rgb}{1.0, 0.55, 0.0}
\definecolor{lightblue}{rgb}{0,0.2,0.5}
\definecolor{blue1}{rgb}{0,0.1,0.9}
\definecolor{codegreen}{rgb}{0,0.6,0}
\definecolor{codegray}{rgb}{0.5,0.5,0.5}
\definecolor{backcolour}{rgb}{0.97,0.97,0.97}
\definecolor{lightblue}{rgb}{0,0.2,0.5}
\lstdefinelanguage{Maple}{
    morekeywords={proc, if, return, map, op, int, for, do, local, nops, convert, end},
    sensitive=false, 
    morecomment=[l]{//}, 
    morecomment=[s]{/*}{*/}, 
    morestring=[b]" 
} 
\tiny\color{gray}\noncopynumber,  
\DeclareMathAlphabet{\eufrak}{U}{}{}{} 
\SetMathAlphabet\eufrak{normal}{U}{euf}{m}{n}
\SetMathAlphabet\eufrak{bold}{U}{euf}{b}{n}
\newcommand{\R}{\mathbb{R}}
\newcommand{\C}{\mathcal{C}}
\newcommand{\E}{\mathbb{E}}
\newcommand{\inte}{\mathbb{N}}
\newcommand*\rel@kern[1]{\kern#1\dimexpr\macc@kerna}
\newcommand*\widebar[1]{
  \begingroup
  \def\mathaccent##1##2{
    \rel@kern{0.8}
    \overline{\rel@kern{-0.8}\macc@nucleus\rel@kern{0.2}}
    \rel@kern{-0.2}
  }
  \macc@depth\@ne
  \let\math@bgroup\@empty \let\math@egroup\macc@set@skewchar
  \mathsurround\z@ \frozen@everymath{\mathgroup\macc@group\relax}
  \macc@set@skewchar\relax
  \let\mathaccentV\macc@nested@a
  \macc@nested@a\relax111{#1}
  \endgroup
}
\DeclareRobustCommand\widecheck[1]{{\mathpalette\@widecheck{#1}}}
\def\@widecheck#1#2{
    \setbox\z@\hbox{\m@th$#1#2$}
    \setbox\tw@\hbox{\m@th$#1
       \widehat{
          \vrule\@width\z@\@height\ht\z@
          \vrule\@height\z@\@width\wd\z@}$}
    \dp\tw@-\ht\z@
    \@tempdima\ht\z@ \advance\@tempdima2\ht\tw@ \divide\@tempdima\thr@@
    \setbox\tw@\hbox{
       \raise\@tempdima\hbox{\scalebox{1}[-1]{\lower\@tempdima\box
\tw@}}}
    {\ooalign{\box\tw@ \cr \box\z@}}}
\newtheorem{prop}{Proposition}
\newtheorem{lemma}{Lemma}
\newtheorem{definition}{Definition}
\newtheorem{corollary}{Corollary}
\newtheorem{theorem}{Theorem}
\newtheorem{remark}{Remark}
\def\({\left(}
\def\){\right)}
\def\[{\left[}
\def\]{\right]}
\newcommand{\id}{\textbf{Id}}
\def\P{\mathbb{P}}
\newcommand*\bigcdot{\mathpalette\bigcdot@{.5}}
\newcommand*\bigcdot@[2]{\mathbin{\vcenter{\hbox{\scalebox{#2}{$\m@th#1\bullet$}}}}} 
\newcommand{\pt}{\partial}
\newcommand{\M}{\mathcal M}
\newenvironment{Proof}{\removelastskip\par\medskip
\noindent{\em Proof.} \rm}{\penalty-20\null\hfill$\square$\par\medbreak}
\begin{document}
\title{
\huge
 On the random generation of Butcher trees 
} 

\author{
 Qiao Huang\footnote{
  School of Mathematics,
  Southeast University,
  Nanjing 211189,
  P.R. China.
  \\ \href{mailto:qiao.huang@seu.edu.cn}{qiao.huang@seu.edu.cn}
 } 
  \qquad
 Nicolas Privault\footnote{
 School of Physical and Mathematical Sciences, 
 Nanyang Technological University, 
 21 Nanyang Link, Singapore 637371.
\\ \href{mailto:nprivault@ntu.edu.sg}{nprivault@ntu.edu.sg}
}
}

\maketitle

\vspace{-0.5cm}

\begin{abstract} 
 The main goal of this paper is to provide an 
 algorithm for the random sampling of Butcher trees
 and the probabilistic numerical solution
 of ordinary differential equations (ODEs).
 This approach complements and simplifies a recent
 approach to the probabilistic representation of ODE solutions, 
 by removing the need to generate random
 branching times. 
 The random sampling of trees
 is compared to the finite order truncation
 of Butcher series in numerical experiments. 
\end{abstract}
\noindent\emph{Keywords}:~
Ordinary differential equations,
Runge-Kutta method,
Butcher series,
random trees,
Monte Carlo method.

\noindent
{\em Mathematics Subject Classification (2020):}
65L06, 
34A25, 
34-04, 
05C05, 
65C05. 

\baselineskip0.7cm

\section{Introduction} 
\noindent
Butcher series \cite{butcher1963}, \cite{butcherbk} are used
to represent the solutions of ordinary differential 
equations (ODEs) 
by combining  rooted tree enumeration with Taylor expansions, see e.g. Chapters~4-6 of \cite{deuflhard}, and \cite{ehairer}
 and references therein for applications to 
 geometric numerical integration.
 Given $f:\R^d \to\R^d$ a smooth function and $t_0<T$,
consider the $d$-dimensional autonomous ODE problem 
\begin{equation}
  \label{ODE}
  \begin{cases}
    \dot x(t) = f(x(t)),
    & t \in(t_0,T],
      \smallskip
      \\
    x(t_0) = x_0\in\R^d. &
  \end{cases}
\end{equation}
If the solution $x(t)$ is sufficiently smooth at $t=t_0$,
Taylor's expansion yields 
\begin{equation}
  \label{fjl133} 
  x(t) = x_0 + (t-t_0) f(x(t_0)) + \sum_{k=2}^\infty \frac{(t-t_0)^k}{k!} \frac{d^{k-1} }{d s^{k-1}}f(x(s))_{\mid s=t_0}, 
\end{equation} 
  for small time $t-t_0$.
  The series \eqref{fjl133} can be rewritten using the
 ``elementary differentials'' 
$$
f, \ \nabla f(f), \ \nabla^2 f(f,f), \ \nabla f(\nabla f (f)), \ldots, 
$$
 as the expansion 
\begin{align} 
  \label{Taylor}
  x(t) & = x_0 + (t-t_0) f(x_0) + \frac{(t-t_0)^2}{2} \big(
  \nabla f(f) \big) (x_0)
  \\
  \nonumber 
   & \quad + \frac{(t-t_0)^3}{6} \big( \nabla^2 f(f,f) + \nabla f(\nabla f (f)) \big) (x_0) + \cdots, 
\end{align}
 which is known to admit a graph-theoretical expression 
 as the Butcher series 
 \begin{equation}
   \label{fjkld9} 
 x(t) = \sum_{\tau} \frac{(t-t_0)^{|\tau|}}{\tau!\sigma(\tau)} F(\tau)(x_0),
 \end{equation}
 over rooted trees $\tau$, 
 where $F(\tau)$ represents elementary differentials,
 and $\sigma (\tau )$, $\tau !$ respectively represent the symmetry and
  factorial of the tree $\tau$,
  see Section~\ref{s2} for definitions.
 The series \eqref{fjkld9} can be used to estimate
 ODE solutions by expanding $x(t)$ into a sum over trees
 up to a finite order.
 However, the generation of high order trees is
 computationally expensive.

 \medskip  

  In this paper, we consider the numerical estimation of the series
  \eqref{fjkld9} using Monte Carlo generation of 
  random trees and branching processes, 
  which are classical probabilistic tools that 
  have been the object of extensive studies,
  see for example \cite{vatutin}, \cite{vatutin2}. 
  Although Monte Carlo estimators cannot compete
   with classical Runge-Kutta schemes,
   they represent an alternative to the truncation
   of series, and they allow for estimates
   whose precision improves when the number
   of iterations increases.
    This approach is also motivated by related constructions
 extending the use of the 
 Feynman-Kac formula to the numerical estimation of the solutions
 of fully nonlinear partial differential equations
 by stochastic branching mechanisms and stochastic cascades,
 see 
 \cite{skorohodbranching},
 \cite{inw},
 \cite{hpmckean}, 
 \cite{sznitman},
 \cite{dalang},
 \cite{waymire},
 \cite{labordere},
 \cite{waymire1},
 \cite{waymire2},
 \cite{penent2022fully}.

   \medskip

 Here, in comparison with
 \cite{penent2022fully},
 we present a direct and simpler approach to the random generation
of Butcher trees that does not require the use of random branching times.
From a simulation point of view,
   this amounts to estimating \eqref{fjkld9}
   as an expected value, by generating random trees $\mathcal T$ 
   having a conditional probability distribution of the form  
\begin{equation}
\nonumber 
   \P ( \mathcal T = \tau \mid | \mathcal T | = n ) 
   =  \frac{c_n}{\tau!\sigma(\tau)}
   \end{equation} 
   over rooted trees $\tau$ of size $|\tau | = n\geq 0$,
   and $(c_n)_{n\geq 0}$ is
   a sequence of positive numbers. 
More precisely, we construct a random tree $\mathcal T$
 whose size complies with a probability distribution $(p_n)_{n\geq 0}$ on $\inte$, such that the solution of the ODE \eqref{ODE} admits the following probabilistic representation: 
\begin{equation}
  \label{random-Butcher-0}
  x(t) =
  \E \left[
 \frac{(t-t_0)^{|\mathcal T|} F(\mathcal T)(x_0) }{(|\mathcal T|\vee 1)p_{|\mathcal T|}}
 \right], 
\end{equation}
 see Theorem~\ref{2jkldf}. 
 The Monte Carlo implementation of \eqref{random-Butcher-0}
 allows us to estimate $x(t)$ at a given $t>t_0$
 within a (possibly finite) time interval. 
 A major difference with the expansion
 \eqref{fjkld9} is that the Monte Carlo method
 proceeds iteratively by randomly sampling trees of arbitrary
 orders, therefore avoiding the evaluation of
 \eqref{fjkld9} at a fixed order. 
 In comparison to 
 the probabilistic representation of
 ODE solutions proposed in \cite{penent4},
 the present algorithm
 does not require the generation of sequences of random
 branching times. 
 
\medskip

 In Section~\ref{s5}
 we extend our approach to semilinear ODEs of the form 
\begin{equation*} 
\begin{cases}
 \dot x(t) = Ax(t) + f(x(t)), & t \in(t_0,T],
  \smallskip
  \\
    x(t_0) = x_0\in\R^d,  &
  \end{cases}
\end{equation*}
where $A$ is a linear operator on $\R^d$.
Here, tree sizes are generated with
the Poisson distribution of mean $t>0$, and
the impact of the operator $A$ is taken into
account via an independent continuous-time Markov chain with generator $A$. 
In this case, our approach extends the construction
 presented in \cite{dalang} for linear PDEs, 
by replacing the use of linear chains (or paths) with general random trees 
in the setting of nonlinear ODEs. 
This construction can also be seen as a randomization of 
exponential Butcher series, see \cite{ostermann}, \cite{LO13}.

 \medskip

 Numerical examples are presented in Section~\ref{sec6},
 using Mathematica, by comparing the Monte Carlo evaluation
 of \eqref{random-Butcher-0} to the truncation
\begin{equation}
\label{truncated} 
     x(t) = \sum_{\tau\in\mathbf T\atop |\tau | \leq n} \frac{(t-t_0)^{|\tau|}}{\tau!\sigma(\tau)} F(\tau)(x_0), \qquad
     t>t_0, 
\end{equation}
     of \eqref{Butcher}
     at different orders $n\geq 1$. 
 The Mathematica codes presented in this paper 
 are available at 

\centerline{\href{https://github.com/nprivaul/mc-odes/blob/main/mc-odes.nb}{https://github.com/nprivaul/mc-odes/blob/main/mc-odes.nb}}

\noindent
 We refer the reader to
     \cite{ketcheson2022computing} for a complete implementation of Butcher series
     computations in Julia. 
     
 \medskip

 We proceed as follows. In Section~\ref{s2}
we review the construction of Butcher trees for the representation
of ODE solutions. Section~\ref{s3} reviews and proves additional
statements needed for labelled trees. 
In Section~\ref{s4} we present the algorithm for the random
generation of Butcher trees by the random attachment of vertices. 
Section~\ref{s5} deals with semilinear ODEs using Poisson
distributed tree sizes and a continuous-time Markov chain.  
 Numerical examples are presented in Section~\ref{sec6},
 and multidimensional versions of the codes are
 listed in Section~\ref{appendix}. 

\section{Butcher trees}
\label{s2}
\noindent
 In this section we review the construction of Butcher trees
 for the series representation \eqref{Taylor} of the solution of
 the ODE \eqref{ODE}. 
A rooted tree $\tau = (V,E, \bigcdot)$ is a nonempty set $V$ of vertices and a set of edges $E$ between some of the pairs of vertices, with a specific vertex called the root and denoted by ``$\bigcdot$'', such that the graph $(V,E)$ is connected with no loops. 
We denote by ``$\emptyset$'' and ``$\bigcdot$'' the empty tree and the single node tree, respectively. 
The next  definition uses the $B^+$ operation, see \cite[pages~44-45]{But21},
namely if $\tau_1, \ldots, \tau_m$ are trees,
then $[\tau_1, \ldots, \tau_m]$ denotes the tree $\tau$ formed by introducing a new vertex, which becomes the root of $\tau$, and $m$ new edges from the root of $\tau$ to each of the roots of $\tau_i$, $i = 1, 2, \ldots , m$.
We also use the notation
$$
[\tau_1^{k_1}, \ldots, \tau_n^{k_n}] = 
[\underbrace{\tau_1, \ldots, \tau_1}_{k_1\mbox{ \footnotesize terms}}, \ldots,
  \underbrace{\tau_n, \ldots, \tau_n}_{k_n\mbox{ \footnotesize terms}}],
\quad
k_1,\ldots , k_n \in \inte.
$$ 
\begin{definition}\cite[Definition~III.1.1]{ehairer}. 
 The set of rooted trees is denoted by $\mathbf T$, and can be defined
 as the closure of $\emptyset$ and $\bigcdot$ under the
 $B^+$ operation, i.e.: 
\begin{enumerate}[(i)]
\item $\emptyset \in\mathbf T$, $\bigcdot\in\mathbf T$,
\item
  $[\tau_1, \ldots, \tau_m]\in\mathbf T$ if $\tau_1, \ldots, \tau_m \in \mathbf{T}$. 
\end{enumerate}
\end{definition}
\noindent
The size (or order) of $\tau \in \mathbf{T}$ is
defined as the number of its vertices, and denoted by $|\tau |$.
In particular, we have $|\emptyset | = 0$ and
$|\bigcdot | = 1$. 
 For $n\geq 0$, we denote by $\mathbf T_n$ the subset of trees of order $n$
 in $\mathbf T$, and for $a_1,\ldots , a_m \in \R^d$ we let
$$ \nabla^m f(a_1 , \ldots , a_m )
 := \left(
\sum_{i_1 , \ldots , i_m=1}^d
\frac{\partial^m f}{\partial x_{i_1}
  \cdots \partial x_{i_m}}
a_{1,i_1}\cdots a_{m,i_m}
\right)_{j=1,\ldots ,d}.
$$
\begin{definition}\cite[Definition~III.1.2]{ehairer}.
\label{d1}
   The elementary differential of $f \in \C^\infty(\R^d, \R^d)$ is the mapping $F:\mathbf T \to
   \C^\infty(\R^d, \R^d)$ defined recursively by
\begin{enumerate}[(i)]
\item
  $F(\emptyset) = \id$, $F(\bigcdot) = f$,
\item
  $F(\tau) = \nabla^m f(F(\tau_1), \ldots, F(\tau_m))$ for $\tau = [\tau_1, \ldots, \tau_m]$.
\end{enumerate}
\end{definition}
\noindent
 The pair $(\tau,F(\tau))$ is called a Butcher tree,
 and the map $F$ can be used to express any of the terms involving $f$
 in the series \eqref{Taylor}. 
 Indeed, when $|\tau|=n$, $F(\tau)$ takes the form 
\begin{equation}\label{elem-diff}
  \prod_{i=1}^n \nabla^{m_i} f = \nabla^{m_1} f(\nabla^{m_2} f(\cdots), \ldots, \ldots(\ldots, f)\cdots),
\end{equation}
 for some integer sequence $(m_i)_{i=1,\ldots,n}$ such that
 $m_n=0$ and $m_1+\cdots + m_n = n-1$.

\medskip

 Given a tree $\tau\in\mathbf T$, the map $F$ provides a way to encode each vertex of $\tau$ using $f$ or its derivatives: each vertex with no descendants is coded by $f$, and the vertices with $m$ descendants are coded by $\nabla^m f$, for $m\geq 1$. 
 In order to characterize the coefficients of
 \eqref{Taylor}, we need two functionals defined on trees.
 \begin{definition}
\label{fjlkf4}
      \begin{enumerate}[a)]
      \item
        \cite[Section~304]{butcherbk}, 
        \cite[Section~2.5]{But21}. 
     The symmetry $\sigma$ of a tree is defined recursively by
\begin{enumerate}[(i)]
\item
  $\sigma(\emptyset) = 1$, $\sigma(\bigcdot) = 1$,
\item
  $\sigma([\tau_1^{k_1}, \ldots, \tau_m^{k_m}])
  = \prod_{i=1}^m k_i! \sigma(\tau_i)^{k_i}$ for $\tau_1, \ldots, \tau_m$ distinct
  and $k_1,\ldots , k_m \in \inte$.
\end{enumerate}
\item
   The factorial (or density) $\tau !$ of a tree $\tau$ is defined by
\begin{enumerate}[(i)]
\item
  $\emptyset !=1$, $\bigcdot\, !=1$,
\item
  $\tau! = |\tau| \prod_{i=1}^m \tau_i !$ for $\tau = [\tau_1, \ldots, \tau_m]$.
\end{enumerate}
\end{enumerate}
   \end{definition}
\noindent
 Using the above formalism, we obtain the following result. 
\begin{prop}
\label{fjlf1}
 \cite[Definition 3.4B, Theorem 3.5C]{But21}.  
 The series \eqref{Taylor} can be rewritten as 
 the Butcher series 
\begin{equation}
     \label{Butcher}
     x(t) = \sum_{\tau\in\mathbf T} \frac{(t-t_0)^{|\tau|}}{\tau!\sigma(\tau)} F(\tau)(x_0), \qquad
     t>t_0. 
\end{equation}
\end{prop}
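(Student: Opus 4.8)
The plan is to prove the identity by matching the coefficients of the Taylor expansion \eqref{fjl133}--\eqref{Taylor} with those of the claimed series \eqref{Butcher}, both regarded as power series in $(t-t_0)$. Since $x(t)=\sum_{n\geq 0}\frac{(t-t_0)^n}{n!}x^{(n)}(t_0)$ and, by \eqref{ODE}, $x^{(n)}(t_0)=\frac{d^{n-1}}{dt^{n-1}}f(x(t))\big|_{t=t_0}$ for $n\geq 1$, it suffices to establish that
\begin{equation}
\label{eq:key}
x^{(n)}(t_0)=\sum_{\tau\in\mathbf T_n}\frac{n!}{\tau!\,\sigma(\tau)}\,F(\tau)(x_0),\qquad n\geq 0,
\end{equation}
after which summation over $n$ immediately yields \eqref{Butcher}, the term $n=0$ contributing $x_0=F(\emptyset)(x_0)$.

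First I would prove the following ``growth'' lemma by induction on the tree structure: for every $\tau\in\mathbf T$ with vertices $v_1,\dots,v_n$,
\begin{equation}
\label{eq:growth}
\frac{d}{dt}F(\tau)(x(t))=\sum_{i=1}^{n}F\big(\tau^{+v_i}\big)(x(t)),
\end{equation}
where $\tau^{+v_i}$ is obtained by grafting a new leaf onto $v_i$. The base case $F(\bigcdot)=f$ gives $\frac{d}{dt}f(x(t))=\nabla f(f)(x(t))=F([\bigcdot])$, and the inductive step follows from the product rule applied to the factorised form \eqref{elem-diff} of $F(\tau)$: each differentiation of a factor $\nabla^{m_i}f$ produces $\nabla^{m_i+1}f(\dots,\dot x)=\nabla^{m_i+1}f(\dots,f)$, i.e. the attachment of a child $\bigcdot$ to $v_i$ via the chain rule and $\dot x=f$. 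Starting from $x^{(1)}(t)=F(\bigcdot)(x(t))$ and iterating \eqref{eq:growth}, an induction on $n$ then gives $x^{(n)}(t_0)=\sum_{\tau\in\mathbf T_n}\alpha(\tau)F(\tau)(x_0)$, where $\alpha(\tau)$ counts, with multiplicity, the number of ways of building $\tau$ from $\bigcdot$ by successively grafting leaves — equivalently, the number of increasing vertex labellings of $\tau$ counted up to the automorphisms of $\tau$.

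The main obstacle is then the purely combinatorial identity $\alpha(\tau)=n!/(\tau!\,\sigma(\tau))$ for $|\tau|=n$. I would treat $\sigma(\tau)$ as the order $|\mathrm{Aut}(\tau)|$ of the automorphism group, which is immediate from Definition~\ref{fjlkf4}a) and the $B^+$-recursion. The number of increasing labellings of a fixed representative of $\tau$ equals $n!/\tau!$, since $\tau!=\prod_{v}h(v)$ is the product of the subtree sizes $h(v)$ (this follows by induction from $\tau!=|\tau|\prod_i\tau_i!$), and this is exactly the hook-length formula for the linear extensions of the tree order with the root as minimum. As $\mathrm{Aut}(\tau)$ acts freely on these labellings and carries increasing labellings to increasing labellings, the number of orbits is $\alpha(\tau)=\frac{1}{|\mathrm{Aut}(\tau)|}\cdot\frac{n!}{\tau!}=\frac{n!}{\tau!\,\sigma(\tau)}$, which is \eqref{eq:key}. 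Alternatively, one may verify \eqref{eq:key} by a direct induction on the $B^+$-decomposition $\tau=[\tau_1^{k_1},\dots,\tau_m^{k_m}]$, distributing the $n-1$ non-root labels among the subtrees by a multinomial coefficient and dividing by the factor $\prod_i k_i!$ coming from identical subtrees; this reproduces precisely the recursions for $\tau!$ and $\sigma(\tau)$ of Definition~\ref{fjlkf4}, the symmetry bookkeeping being the delicate point.

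As a sanity check I would confirm \eqref{eq:key} for $n\leq 4$ against the explicit expansion \eqref{Taylor}, and I note the alternative route of defining $y(t)$ as the right-hand side of \eqref{Butcher} and verifying $\dot y=f(y)$ with $y(t_0)=x_0$ by the same grafting identity \eqref{eq:growth}, so that $y=x$ by uniqueness of the ODE solution; this bypasses \eqref{eq:key} but relies on the same combinatorial input.
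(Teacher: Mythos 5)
Your proposal is correct in outline, but it takes a genuinely different route from the one the paper relies on. The paper itself does not prove Proposition~\ref{fjlf1}; it cites \cite{But21}, and its closest internal argument is the proof of the lemma giving \eqref{der-x}, which together with the Taylor expansion yields \eqref{Butcher} exactly as in your reduction to your key identity. That proof is a single induction on $n$ via the multivariate Fa\`a di Bruno formula: the root decomposition $\tau=[\tau_1^{k_1},\ldots,\tau_{n-1}^{k_{n-1}}]$ is matched against the Diophantine sum $\sum_i ik_i=n-1$, and the coefficient $n!/(\tau!\sigma(\tau))$ emerges directly from the recursions of Definition~\ref{fjlkf4}, with no auxiliary combinatorics. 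Your argument instead splits the work into an analytic step --- the grafting lemma, stating that $\frac{d}{dt}F(\tau)(x(t))=\sum_v F(\tau^{+v})(x(t))$ --- and a purely combinatorial step, the identity $\alpha(\tau)=n!/(\tau!\sigma(\tau))$ proved via the hook-length formula $n!/\tau!$ for increasing labellings and the free action of $\mathrm{Aut}(\tau)$, using $\sigma(\tau)=|\mathrm{Aut}(\tau)|$. This is essentially the labelled-tree approach of \cite[Theorem~II.2.6]{HNW93}, and it reproduces the content of the paper's Section~\ref{s3}: your ``building sequences'' are exactly the attachment sequences of Lemma~\ref{lemma-label}, your combinatorial identity is exactly Proposition~\ref{fjlk13} (which the paper also cites rather than proves, so you supply a proof the paper omits), and your intermediate formula is the first identity in \eqref{Butcher-2}. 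What each approach buys: Fa\`a di Bruno is shorter and self-contained given that formula; your route is more transparent about \emph{why} the coefficients count labellings, and it connects directly to the uniform-attachment sampling algorithm of Definition~\ref{fjkldf}, which is literally your growth process made random. Two small points of care: the inductive step of your grafting lemma should be carried out on the recursive definition $F(\tau)=\nabla^m f(F(\tau_1),\ldots,F(\tau_m))$ (multilinearity plus chain rule, with the new argument $\dot x=f$ attaching a leaf either at the root or, by induction, inside a subtree), rather than on the schematic factorised form \eqref{elem-diff}, which suppresses the contraction structure; and the hook-length formula, while standard, deserves either a citation or its short inductive proof, together with the observation $\tau!=\prod_v h(v)$ that you correctly note.
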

\noindent
 The computation of the Taylor expansion \eqref{fjl133} can be
 implemented in the following Mathematica code,
 by noting that in order to calculate the term of
   order $k\geq 2$ in the
   Taylor expansion \eqref{fjl133}, the quantity $x(s)$
  in $f(x(s))$ can be replaced with its expansion until the order $k-1$,
  as the $(k-1)$-$th$ derivative $d^{k-1} / d s^{k-1}$
  of $(s-t_0)^l$ vanishes at $s=t_0$ when $l \geq k$. 

\medskip
 
\begin{lstlisting}[language=Mathematica]
  Taylor[f_, t_, x0__, t0_, k_] := (d = Length[x0]; 
  g[x_] := x0 + f[x0]*(x - t0); h = Array[hh, d]; 
  For[j = 2, j <= k, j++, For[i = 1, i <= d, i++, 
    h[[i]][z_] = g[x][[i]] + (z - t0)^j/j!*D[f[g[s]][[i]], {s, j - 1}] /. {s -> t0};]; v = {}; 
   For[i = 1, i <= d, i++, v = Append[v, h[[i]][x]]]; g[x_] = v]; 
  Return[g[t]])
f[x__] := {f1[x[[1]], x[[2]]], f2[x[[1]], x[[2]]]}
Taylor[f, t, {x1, x2}, 0, 2]
\end{lstlisting}
 with sample output 
\begin{align*}
  &
  \left\{
    \text{x1}+t \text{f1}(\text{x1},\text{x2})+
 \frac{1}{2} t^2 \left(\text{f1}^{(0,1)}(\text{x1},\text{x2}) \text{f2}(\text{x1},\text{x2})+\text{f1}(\text{x1},\text{x2})
  \text{f1}^{(1,0)}(\text{x1},\text{x2})\right),
  \right.
  \\
  &
  \left. \ \
  \text{x2}+t \text{f2}(\text{x1},\text{x2})+
  \frac{1}{2} t^2 \left(\text{f1}(\text{x1},\text{x2})
  \text{f2}^{(1,0)}(\text{x1},\text{x2})+\text{f2}(\text{x1},\text{x2}) \text{f2}^{(0,1)}(\text{x1},\text{x2})\right)
  \right\} 
\end{align*}
for $k=2$, $d=2$, and $t_0=0$.
Instead of the above code, we will use
the following implementation of the truncated 
 Butcher series
 \eqref{truncated} up to any tree order $n\geq 1$ in case $d=1$,
 which also prints out the corresponding trees. 

\medskip

\begin{lstlisting}[language=Mathematica]
  B[f_, t_, x0_, t0_, n_] := (If[n == 0, Return[x0], 
    If[n == 1, Return[x0 + (t - t0)*f[x0]], 
     sample = x0 + (t - t0)*f[x0]; g = Graph[{1 -> 2}]; 
     g = Graph[g, VertexLabels -> {1 -> D[f[ y], y]}]; 
     g = Graph[g, VertexLabels -> {2 -> f[y]}]; m = 1; 
     sample = sample + 1/2*(t - t0)^VertexCount[g]*
         Product[ff[[2]] , {ff, List @@@ PropertyValue[g, VertexLabels]}] /. {y -> x0}; 
     list = {g}; 
     While[m <= (n - 2), temp = list; list = {}; 
      Do[l = VertexCount[g]; 
       For[j = 1, j <= l, j++, gg = VertexAdd[g, {l + 1}]; 
        gg = Graph[gg, VertexLabels -> {l + 1 -> f[ y]}]; 
        lab = Sort[List @@@ PropertyValue[gg, VertexLabels]][[j]][[2]];
        gg = Graph[gg, VertexLabels -> {j -> D[lab, y]}]; 
        gg = EdgeAdd[gg, j -> l + 1]; Print[gg]; 
        sample = sample + (t - t0)^(l + 1)/(l + 1)!*
            Product[ff[[2]] , {ff, List @@@ PropertyValue[gg, VertexLabels]}] /. {y -> x0};
         list = Append[list, gg]], {g, temp}]; m = m + 1]; 
     Return[sample]]]);
\end{lstlisting}

\noindent 
 For example, the command \verb|B[f,t,x0,t0,4]| 
 produces the scalar output 

\vspace{-0.8cm} 

\begin{align*}
& \text{x0} +\text{f}(\text{x0})
   (\text{t}-\text{t0})+\frac{1}{2} \text{f}(\text{x0})
  (\text{t}-\text{t0})^2 \text{f}'(\text{x0})
  \\
  & + \frac{1}{6} \text{f}(\text{x0})^2
  (\text{t}-\text{t0})^3 \text{f}''(\text{x0})
  +\frac{1}{6} \text{f}(\text{x0})
  (\text{t}-\text{t0})^3 \text{f}'(\text{x0})^2
   \\
   &
    +\frac{1}{24}
   \text{f}(\text{x0}) (\text{t}-\text{t0})^4
   \text{f}'(\text{x0})^3
   +\frac{1}{24} \text{f}(\text{x0})^3 \text{f}^{(3)}(\text{x0})
   (\text{t}-\text{t0})^4+\frac{1}{6}
   \text{f}(\text{x0})^2 (\text{t}-\text{t0})^4 \text{f}'(\text{x0})
   \text{f}''(\text{x0})
   \end{align*} 

\noindent
and enumerates the trees appearing in the series \eqref{Butcher}. 

\smallskip

\begin{figure}[H]
\centering
\scalebox{0.7}{
  \begin{subfigure}[]{0.25\textwidth}
\includegraphics[width=\textwidth]{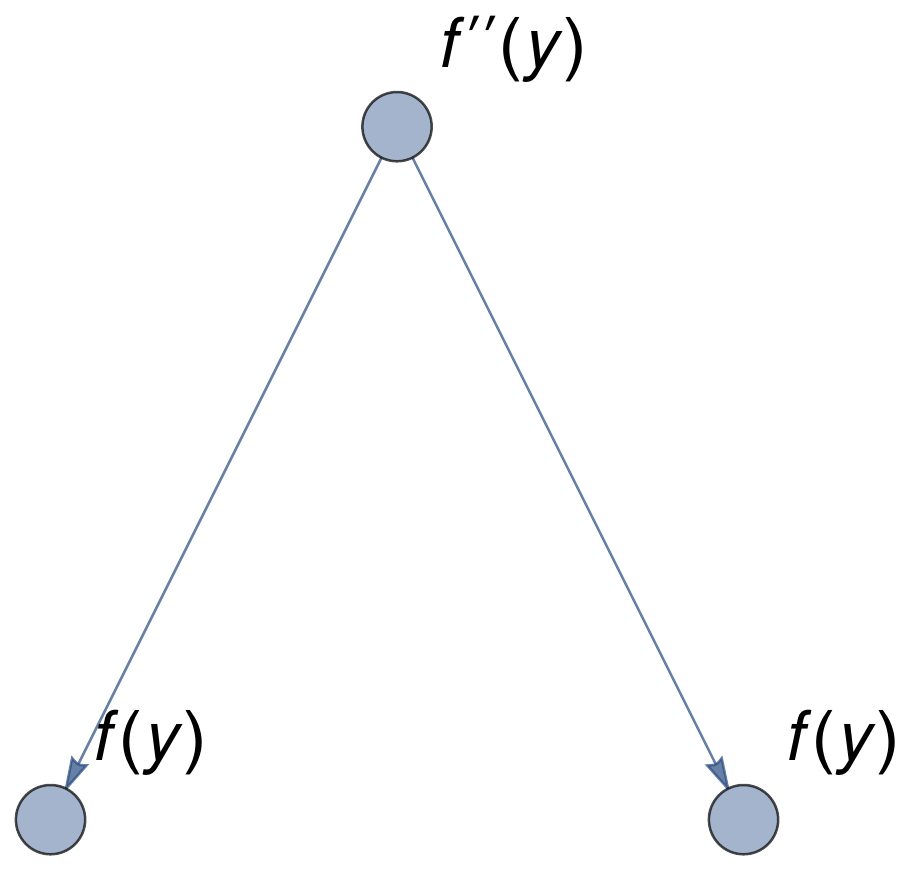} 
 \end{subfigure}
  \begin{subfigure}[]{0.27\textwidth}
\includegraphics[width=\textwidth]{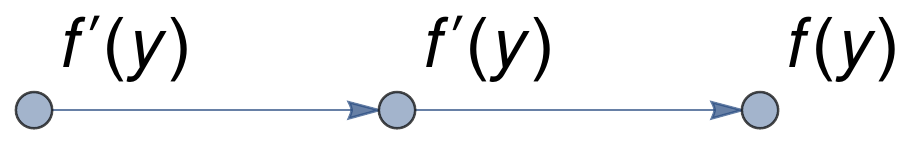} 
 \end{subfigure}
 \begin{subfigure}[]{0.30\textwidth}
\includegraphics[width=\textwidth]{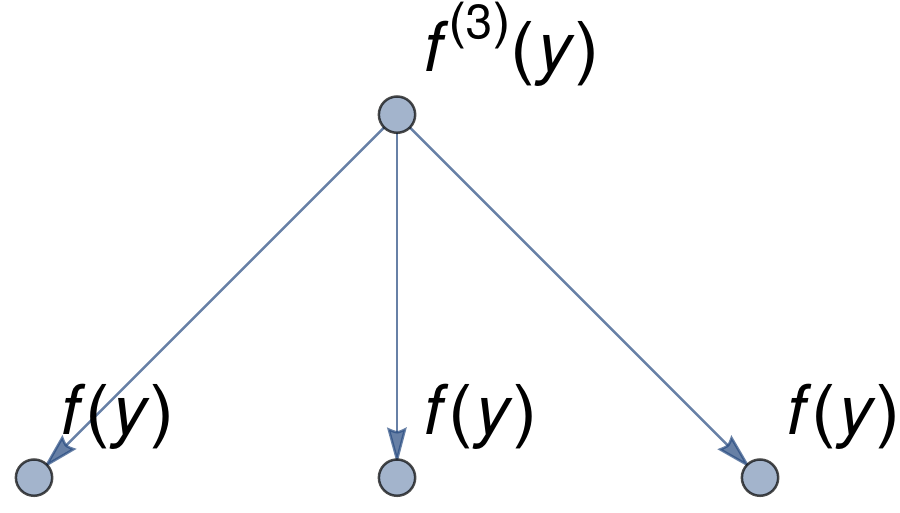} 
 \end{subfigure}
  \begin{subfigure}[]{0.30\textwidth}
\includegraphics[width=\textwidth]{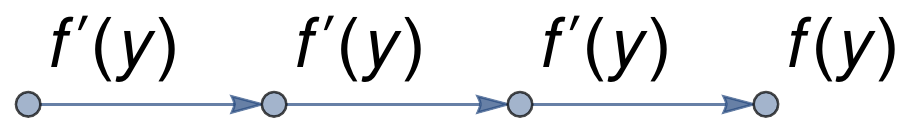} 
 \end{subfigure}
}
\scalebox{0.97}{
\begin{subfigure}[]{0.193\textwidth}
\includegraphics[width=\textwidth]{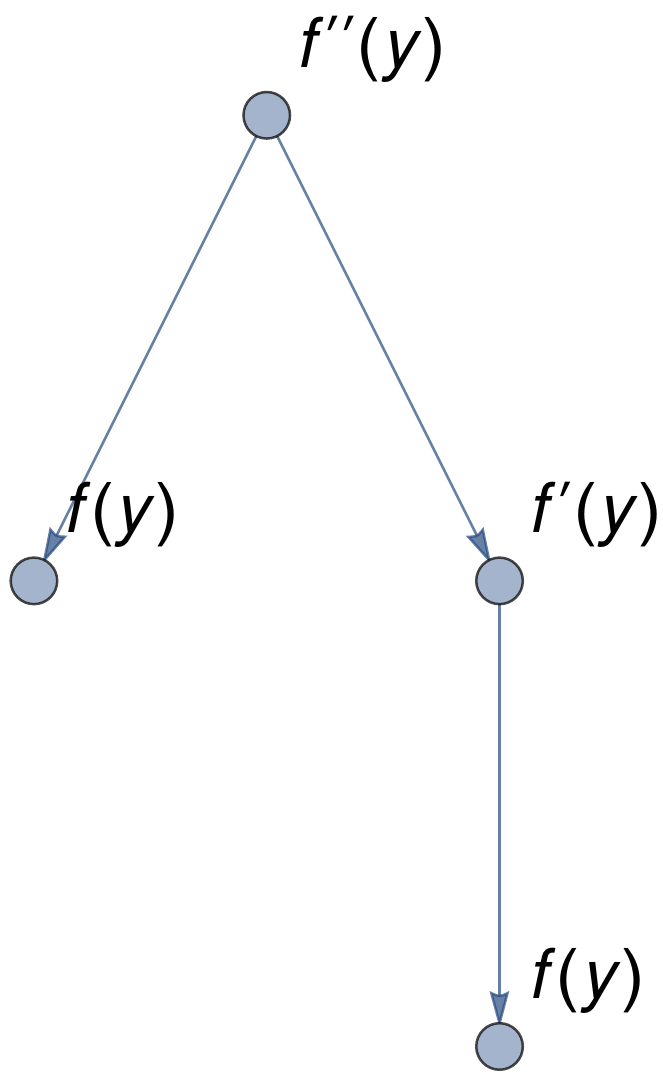} 
 \end{subfigure}
  \begin{subfigure}[]{0.19\textwidth}
\includegraphics[width=\textwidth]{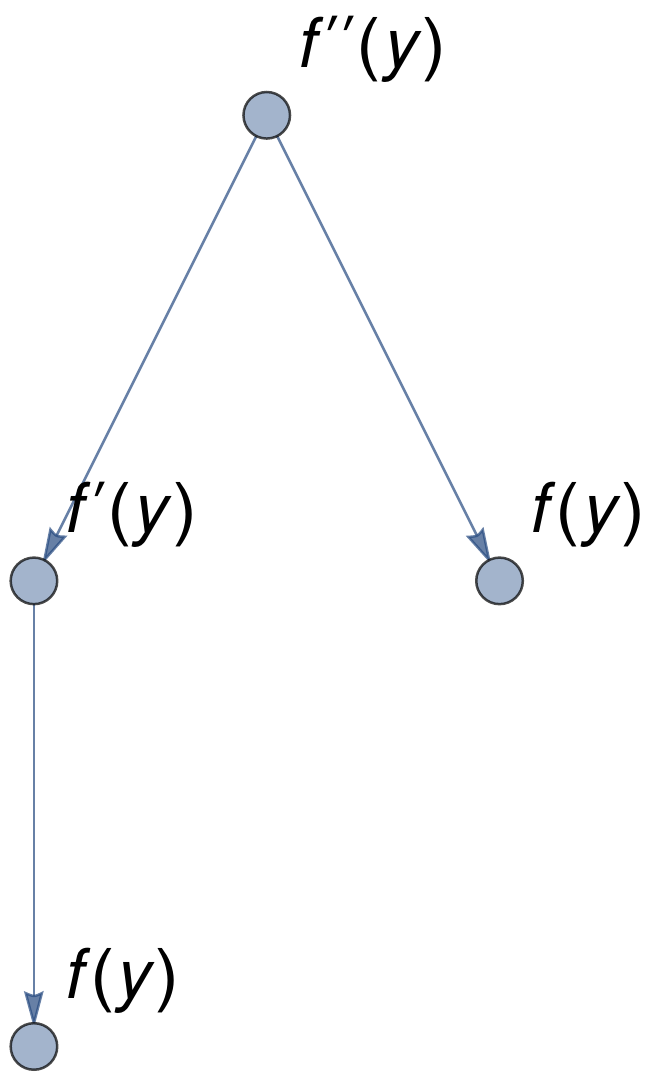} 
 \end{subfigure}
 \begin{subfigure}[]{0.19\textwidth}
\includegraphics[width=\textwidth]{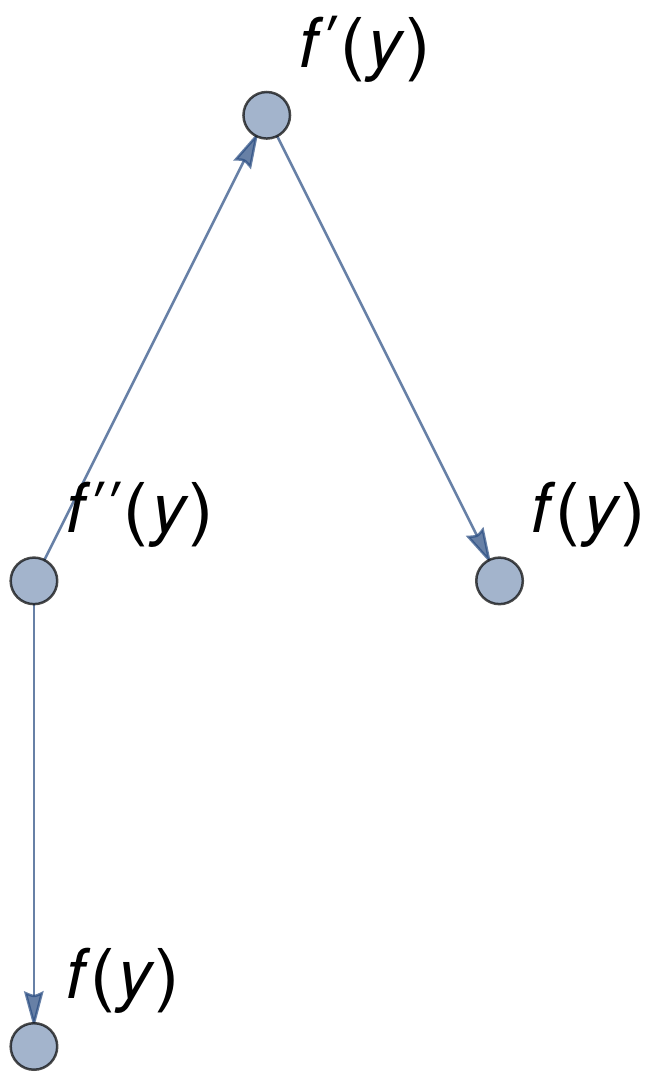} 
 \end{subfigure}
 \begin{subfigure}[]{0.282\textwidth}
\includegraphics[width=\textwidth]{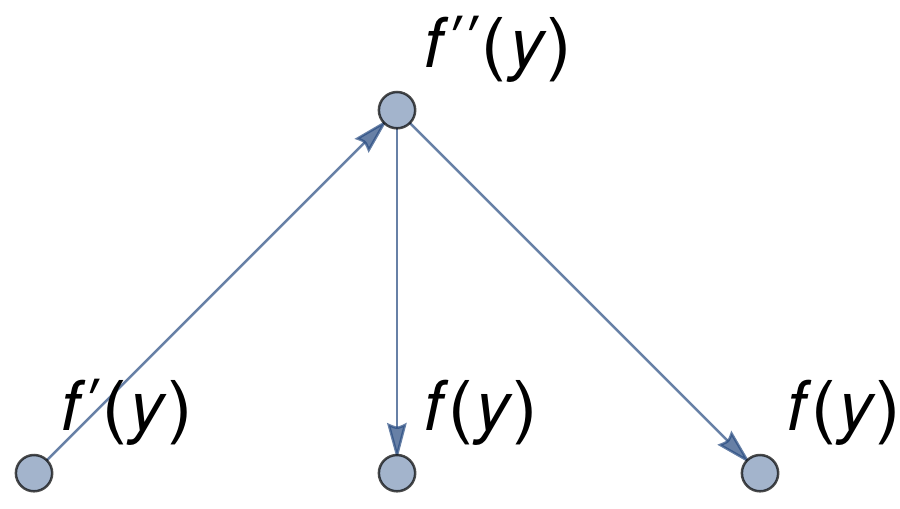} 
 \end{subfigure}
}
\vskip0.1cm
\caption{Generation of Butcher trees.}
\label{figexpl-2}
\end{figure}

\vskip-0.4cm

\noindent
Figure~\ref{figexpl-2} prints directed rooted trees
of orders $3$ and $4$, each with a natural orientation away from the root. 
In addition to the result of Proposition~\ref{fjlf1}, the derivatives of the Butcher series \eqref{Butcher} can be written using the elementary differentials $F$ introduced in Definition~\ref{d1}, as in the
following lemma. 
\begin{lemma}
  \cite[Lemma 3.5B]{But21} 
  We have
  \begin{equation}\label{der-x}
  x^{(n)}(t) = \sum_{\tau\in\mathbf T_n} \alpha ( \tau ) F(\tau)(x(t)), \quad n\geq 1, 
\end{equation}
  where
  \begin{equation}
    \label{fjkld34} 
  \alpha (\tau ) := \frac{|\tau |!}{\tau!\sigma(\tau)},
  \qquad
  \tau\in\mathbf T. 
\end{equation}
\end{lemma}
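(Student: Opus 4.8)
The plan is to derive \eqref{der-x} directly from the Butcher series of Proposition~\ref{fjlf1} by exploiting the autonomy of \eqref{ODE}, rather than by differentiating term by term. First I would observe that because the equation is autonomous, for every fixed $s\in[t_0,T)$ the restriction of $x$ to $[s,T]$ solves the same problem $\dot y=f(y)$ with initial value $y(s)=x(s)$. Hence Proposition~\ref{fjlf1} applies verbatim with $t_0$ and $x_0$ replaced by $s$ and $x(s)$, giving
\begin{equation*}
x(t)=\sum_{\tau\in\mathbf T}\frac{(t-s)^{|\tau|}}{\tau!\sigma(\tau)}F(\tau)(x(s)),\qquad t\ge s.
\end{equation*}

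Next I would regroup this expansion by tree order. Writing $\mathbf T=\bigcup_{n\ge0}\mathbf T_n$ and using $|\tau|=n$ on $\mathbf T_n$, the series becomes a power series in $(t-s)$,
\begin{equation*}
x(t)=\sum_{n\ge0}(t-s)^n\sum_{\tau\in\mathbf T_n}\frac{1}{\tau!\sigma(\tau)}F(\tau)(x(s)).
\end{equation*}
Comparing this with the Taylor expansion $x(t)=\sum_{n\ge0}(t-s)^n x^{(n)}(s)/n!$ of the smooth solution $x$ about $t=s$, and matching the coefficient of $(t-s)^n$, yields
\begin{equation*}
\frac{x^{(n)}(s)}{n!}=\sum_{\tau\in\mathbf T_n}\frac{1}{\tau!\sigma(\tau)}F(\tau)(x(s)).
\end{equation*}
Multiplying by $n!=|\tau|!$, which is legitimate since every $\tau\in\mathbf T_n$ satisfies $|\tau|=n$, and recalling $\alpha(\tau)=|\tau|!/(\tau!\sigma(\tau))$ from \eqref{fjkld34}, gives \eqref{der-x} at $t=s$; since $s$ was arbitrary, this proves the lemma.

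The main point requiring care is the coefficient matching: if $f$ is merely smooth, the Butcher series need not converge, so the argument should be read at the level of Taylor coefficients. This is legitimate because \eqref{Butcher}, which is equivalent to the expansions \eqref{fjl133} and \eqref{Taylor}, is constructed precisely as the order-by-order Taylor expansion of $x$; hence the order-$n$ identity holds as an identity of $n$-th Taylor coefficients, independently of any convergence issue. The only genuinely new input beyond Proposition~\ref{fjlf1} is the evaluation of the elementary differentials at the current state $x(s)$ rather than at $x_0$, and this is exactly what autonomy supplies.

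An alternative route is induction on $n$, starting from $\mathbf T_1=\{\bigcdot\}$, $\alpha(\bigcdot)=1$, and $\dot x=f(x)=F(\bigcdot)(x)$, and then differentiating $x^{(n)}(t)=\sum_{\tau\in\mathbf T_n}\alpha(\tau)F(\tau)(x(t))$ via the chain rule together with $\dot x=f(x)$. There the obstacle shifts from analysis to combinatorics: one must show that $\nabla F(\tau)(f)$ reproduces, with exactly the weights $\alpha$, the elementary differentials of all trees of order $n+1$ obtained by grafting a new leaf onto $\tau$, so that summing over $\mathbf T_n$ recovers a sum over $\mathbf T_{n+1}$ with the correct multiplicities. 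I expect the autonomy-based argument above to be the cleaner of the two, since it isolates all combinatorial content inside the already-established Proposition~\ref{fjlf1}.
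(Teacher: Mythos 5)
Your derivation is internally coherent, and you handle the one delicate analytic point correctly: reading the identity at the level of Taylor coefficients rather than assuming convergence of the series. Restarting the autonomous flow at time $s$ and matching powers of $(t-s)$ does yield \eqref{der-x} with the weights \eqref{fjkld34}, and the regrouping of $\mathbf T$ into the finite sets $\mathbf T_n$ is unobjectionable. However, this is a genuinely different route from the paper's, and it carries a logical cost. The paper proves \eqref{der-x} from scratch, by induction on $n$ using the multivariate Fa\`a di Bruno formula: writing $x^{(n)}(t) = [f(x)]^{(n-1)}(t)$, substituting the induction hypothesis for the lower-order derivatives of $x$, and recognizing the recursive definitions of $\tau!$ and $\sigma(\tau)$ in the resulting multinomial weights, each term being identified with the tree $\tau = [\tau_1^{k_1},\ldots,\tau_{n-1}^{k_{n-1}}]$ of order $n$. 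This is essentially the ``alternative route'' you sketch in your final paragraph, with Fa\`a di Bruno doing the grafting combinatorics for you, and the paper carries it out precisely because it is self-contained.

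The cost of your approach is circularity with respect to the surrounding development. In \cite{But21} --- and implicitly in this paper --- the derivative formula \eqref{der-x} is the ingredient from which the Butcher series \eqref{Butcher} is derived (it is Lemma 3.5B feeding Theorem 3.5C), not the other way around. Indeed, immediately after this lemma the paper uses \eqref{der-x} to obtain the expansion \eqref{g-series} of $g(x(t))$, and then remarks that taking $g=f$ there ``in turn proves that the Butcher series \eqref{Butcher} solves \eqref{ODE}.'' If \eqref{der-x} were itself deduced from Proposition~\ref{fjlf1}, that closing verification would verify nothing. So while your argument is formally valid if Proposition~\ref{fjlf1} is accepted as an external black box (the paper does state it without proof), it inverts the intended logical order; to keep the development non-circular you would need either to prove Proposition~\ref{fjlf1} independently first, or to give the self-contained Fa\`a di Bruno induction that the paper gives.
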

\begin{proof} 
  For completeness, we give a proof of \eqref{der-x}
  by induction using the Fa\`a di Bruno formula \cite[Theorem 2.1]{constantine},
  which states that for a smooth function $g: \R^d\to\R^{d'}$, the derivatives of $y(t)= g(x(t))$ are given by, for each $n\geq 1$, 
  \begin{align}
    \label{Bruno}
    & y^{(n)}(t)
    \\
    \nonumber 
     & \hskip-0.13cm = \sum_{k_1,\ldots , k_n \geq 0
    \atop
  \sum_{i=1}^n ik_i = n
  } \frac{n!}{\prod_{i=1}^n k_i!(i!)^{k_i}} \nabla^{k_1+\cdots +k_n} g(x(t)) \Big( \underbrace{\dot x(t), \ldots, \dot x(t)}_{k_1\mbox{ \footnotesize terms}}, \ldots, \underbrace{x^{(n)}(t), \ldots, x^{(n)}(t)}_{k_n \mbox{ \footnotesize terms}} \Big),
\end{align}
where the summation is taken over all different nonnegative solutions
$(k_1,\ldots, k_n)$ of the linear Diophantine equation $\sum_{i=1}^n ik_i = n$. 
 Now we prove \eqref{der-x}. We start with $n=1$, for which the result holds. Assume that the result is true for lower orders than $n$. Then we apply Fa\`a di Bruno's formula \eqref{Bruno} to derive
\begin{align*} 
 &   x^{(n)}(t) = [f(x)]^{(n-1)}(t)
    \\
     & = \sum_{k_1,\ldots , k_{n-1} \geq 0
    \atop
  \sum_{i=1}^{n-1} ik_i = n-1
    }
    \frac{(n-1)!}{\prod_{i=1}^{n-1} k_i!(i!)^{k_i}}
    \\
    & \qquad
    \sum_{\tau_i} \prod_{i=1}^{n-1} \left( \frac{i!}{\tau_i!\sigma(\tau_i)} \right)^{k_i} \nabla^{k_1+\cdots +k_{n-1}} f \big( F(\tau_1)^{k_1}, \ldots, F(\tau_{n-1})^{k_{n-1}} \big)(x(t)) \\
    &= \sum_{k_1,\ldots , k_{n-1} \geq 0
    \atop
  \sum_{i=1}^{n-1} ik_i = n-1
    } n!\frac{
      \nabla^{k_1+\cdots +k_{n-1}} f \big( F(\tau_1)^{k_1}, \ldots, F(\tau_{n-1})^{k_{n-1}} \big)(x(t))
    }{\left( n\prod_{i=1}^{n-1} (\tau_i!)^{k_i}\right) \left(
      \prod_{i=1}^{n-1} k_i!\sigma(\tau_i)^{k_i}\right)
      ]} \\
    &= \sum_{\tau\in\mathbf T_n} \frac{n!}{\tau!\sigma(\tau)} F(\tau)(x(t)), 
  \end{align*}
where in the first line, the first summation is taken over all nonnegative integers $(k_i)_{i=1,\ldots,n-1}$ satisfying $\sum_{i=1}^{n-1} ik_i = n-1$, the second summation is taken over all trees
$(\tau_i)_{i=1,\ldots,n-1}$ with $|\tau_i|=i$; in the last line $\tau = [\tau_1^{k_1}, \ldots, \tau_{n-1}^{k_{n-1}}]$ so that $|\tau| = 1+\sum_{i=1}^{n-1} ik_i = n$.
\end{proof} 
\noindent
The next lemma presents a tree expansion for $g(x(t))$, 
with $g \in \C^\infty(\R^d, \R^{d'})$, $d,d' \geq 1$. 
For this purpose, we consider the elementary differential
$F_g:\mathbf T \to \C^\infty(\R^d, \R^{d'})$
of $g$ 
 defined by 
\begin{equation}
  \label{fg} 
F_g(\emptyset ) := \id, \ \ 
F_g(\bigcdot) := g, \ \ 
\mbox{and}
 \ \  F_g(\tau) := \nabla^m g(F(\tau_1), \ldots, F(\tau_m)),
 \ \ \tau = [\tau_1, \ldots, \tau_m], 
\end{equation} 
with $\nabla^0 g = g$ and $0!=1$.
\begin{lemma}
  Let $g \in \C^\infty(\R^d, \R^{d'})$.
  We have
\begin{equation}\label{g-series}
 g(x(t))
 = \sum_{\tau \in \mathbf T\setminus\{\emptyset\}} \frac{(t-t_0)^{|\tau|-1}  }{(|\tau|-1) !} \alpha(\tau) F_g(\tau) (x_0). 
\end{equation}
\end{lemma}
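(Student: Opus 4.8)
The plan is to recover \eqref{g-series} from the Taylor expansion of $g(x(t))$ at $t=t_0$, by identifying each Taylor coefficient with a sum over trees of a single fixed size. Writing
$$
g(x(t)) = \sum_{m=0}^\infty \frac{(t-t_0)^m}{m!}\, \frac{d^m}{dt^m}\big[g(x(t))\big]_{\mid t=t_0},
$$
and grouping the terms of \eqref{g-series} according to tree size $|\tau|=m+1$, it suffices to establish the pointwise identity
$$
\frac{d^m}{dt^m}\big[g(x(t))\big] = \sum_{\tau\in\mathbf T_{m+1}} \alpha(\tau) F_g(\tau)(x(t)), \qquad m\geq 0,
$$
and then to set $t=t_0$, so that $x(t_0)=x_0$, and reindex via $n=m+1$.

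For $m=0$ the identity is immediate, since $\mathbf T_1=\{\bigcdot\}$ with $\alpha(\bigcdot)=1$ and $F_g(\bigcdot)=g$. For $m\geq 1$ I would follow the proof of \eqref{der-x} almost verbatim, the only changes being that the outermost function is $g$ rather than $f$, and the differentiation order is $m$ rather than $n-1$. Concretely, I would apply the Fa\`a di Bruno formula \eqref{Bruno} to $y(t)=g(x(t))$, and then substitute the expansion $x^{(i)}(t)=\sum_{\tau_i\in\mathbf T_i}\alpha(\tau_i)F(\tau_i)(x(t))$ from \eqref{der-x} into each argument $x^{(i)}$, using the multilinearity and symmetry of $\nabla^{k_1+\cdots+k_m}g$ to expand the resulting products. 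Note that the inner elementary differentials still carry $F$ (hence $f$), in agreement with the definition \eqref{fg} of $F_g$, while only the root vertex is coded by $g$.

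The crux, which I expect to be the main obstacle, is the combinatorial bookkeeping needed to pass from the indexing by the solutions $(k_1,\ldots,k_m)$ of $\sum_i ik_i=m$ to an indexing by the tree $\tau=[\tau_1^{k_1},\ldots,\tau_m^{k_m}]\in\mathbf T_{m+1}$ itself. After expansion, the terms regroup according to the multiset of subtrees hanging from the root, say with multiplicity $n_\rho$ for each distinct $\rho$, where $\sum_{|\rho|=i} n_\rho = k_i$. Collecting the factors, the multinomial count $k_i!/\prod_{|\rho|=i} n_\rho!$ of distinct fillings of the symmetric slots cancels the $k_i!$ of \eqref{Bruno}, while the identity $\prod_i (i!)^{k_i}=\prod_\rho (|\rho|!)^{n_\rho}$ cancels the numerators $|\rho|!$ produced by the factors $\alpha(\tau_i)$; recalling $\tau!=(m+1)\prod_\rho(\rho!)^{n_\rho}$ and $\sigma(\tau)=\prod_\rho n_\rho!\,\sigma(\rho)^{n_\rho}$, the surviving coefficient is exactly $\alpha(\tau)=(m+1)!/(\tau!\sigma(\tau))$, multiplying $F_g(\tau)=\nabla^{\sum_\rho n_\rho}g(\ldots,F(\rho),\ldots)$. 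This reproduces the displayed right-hand side, and substituting back into the Taylor series yields \eqref{g-series}.
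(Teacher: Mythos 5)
Your proof is correct, but it follows a genuinely different route from the paper's. The paper expands $g$ spatially around $x_0$, writing $g(x(t))=\sum_{m\geq 0}\frac{1}{m!}\nabla^m g(x_0)\big((x(t)-x_0)^{\otimes m}\big)$, substitutes the Butcher series \eqref{Butcher} for $x(t)-x_0$ in each slot, and then regroups ordered tuples $(\tau_1,\ldots,\tau_m)$ into the single tree $\tau=[\tau_1,\ldots,\tau_m]$ using the multinomial count of orderings together with the recursions for $\tau!$ and $\sigma(\tau)$; no differentiation in $t$ is needed. You instead expand temporally around $t_0$ and reduce the lemma to the pointwise identity
\begin{equation*}
\frac{d^m}{dt^m}\big[g(x(t))\big]=\sum_{\tau\in\mathbf T_{m+1}}\alpha(\tau)\,F_g(\tau)(x(t)),\qquad m\geq 0,
\end{equation*}
proved by Fa\`a di Bruno \eqref{Bruno} applied to $y=g(x(t))$, substitution of \eqref{der-x} into each argument $x^{(i)}$, and the same multiset-at-the-root bookkeeping; your cancellation ($k_i!/\prod_\rho n_\rho!$ against the $k_i!$ of \eqref{Bruno}, $\prod_i(i!)^{k_i}=\prod_\rho(|\rho|!)^{n_\rho}$ against the numerators of $\alpha$, then $\tau!=(m+1)\prod_\rho(\rho!)^{n_\rho}$ and $\sigma(\tau)=\prod_\rho n_\rho!\,\sigma(\rho)^{n_\rho}$) is exactly right and yields $\alpha(\tau)=(m+1)!/(\tau!\sigma(\tau))$. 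The trade-off: your route requires the heavier Fa\`a di Bruno machinery but delivers a stronger intermediate result of independent interest, namely the $g$-version of \eqref{der-x} (generalizing the second identity in \eqref{Butcher-2}), valid at every $t$ and not just at $t_0$; the paper's route is shorter given that \eqref{Butcher} is already established, since it only needs multilinearity and the regrouping combinatorics, and in fact your argument handles that combinatorial step more carefully than the paper's own condensed treatment of the analogous step in the proof of \eqref{der-x}, where distinct trees filling slots of the same order are not displayed explicitly.
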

\begin{proof} 
  Letting $x(t) = (x_i(t))_{1\leq i \leq d}$, 
  $x_0 = (x_i(0))_{1\leq i \leq d}$, 
  and denoting by $(x(t)-x_0)^{\otimes m}$
  the matrix
  $\big((x_{i_1}(t)-x_{i_1}(0)), \ldots , (x_{i_m}(t)-x_{i_m}(0))\big)_{1\leq i_1,\ldots , i_m \leq d}$,
  by \eqref{Butcher} and Definition~\ref{fjlkf4} we have 
\begin{equation*}
  \begin{split}
    g(x(t)) &= \sum_{m=0}^\infty \frac{1}{m !} \nabla^m g(x_0) \left( (x(t)-x_0)^{\otimes m} \right) \\
    &= \sum_{m=0}^\infty \frac{1}{m !} \sum_{\tau_1,\ldots \tau_m\in \mathbf T\setminus\{\emptyset\}} \frac{(t-t_0)^{\left|\tau_1\right|+\ldots+\left|\tau_m\right|}}{
       \prod_{i=1}^m ( \tau_i ! \sigma(\tau_i) ) }  \nabla^m g \left(F\left(\tau_1\right), \ldots, F\left(\tau_m\right)\right) (x_0) \\
    &= \sum_{m=0}^\infty \sum_{\tau_1,\ldots \tau_m\in \mathbf T\setminus\{\emptyset\}} \frac{|\tau| \prod_j k_j!}{m !\tau ! \sigma(\tau)}
    (t-t_0)^{|\tau|-1} \nabla^m g \left(F\left(\tau_1\right), \ldots, F\left(\tau_m\right)\right) (x_0),
  \end{split}
\end{equation*}
where the indexes $k_1,k_2,\ldots $ count
equal trees among $\tau_1, \ldots, \tau_m$.
From the fact that there are $\binom{m}{k_1,k_2, \ldots}$ possibilities for writing the tree $\tau$ in the form $[\tau_1, \ldots, \tau_m]$,
it then follows that 
 \begin{equation*}
  \begin{split}
 g(x(t)) 
 &=\sum_{\tau \in \mathbf T\setminus\{\emptyset\}}
 |\tau|
 \frac{ (t-t_0)^{|\tau|-1}}{\tau ! \sigma(\tau)} \nabla^m g \left(F\left(\tau_1\right), \ldots, F\left(\tau_m\right)\right) (x_0) 
 \\  &=\sum_{\tau \in \mathbf T\setminus\{\emptyset\}} \frac{(t-t_0)^{|\tau|-1}}{(|\tau|-1) !}
 \alpha(\tau) \nabla^m g \left(F\left(\tau_1\right), \ldots, F\left(\tau_m\right)\right) (x_0),
  \end{split}
\end{equation*}
 where $\alpha (\tau )$ is defined in
 \eqref{fjkld34}. 
 Then, we get \eqref{g-series}. 
\end{proof}
\noindent
 In particular, when $g$ is the function $f$ in ODE \eqref{ODE}, then $F_f = F$ and
\begin{equation*}
  f(x(t)) =
  \sum_{\tau \in \mathbf T\setminus\{\emptyset\}} \frac{(t-t_0)^{|\tau|-1}  }{(|\tau|-1) !} \alpha ( \tau ) F(\tau)(x_0),
\end{equation*}
which in turn proves that the Butcher series \eqref{Butcher} solves \eqref{ODE}.
\section{Labelled trees}
\label{s3}
\noindent
Our random generation
of Butcher trees will use
labelled trees, which provide a combinatorial interpretation
 of the coefficients appearing in \eqref{Butcher}.
By convention, the empty tree $\emptyset$ is labelled by $0$ 
 and the root of any non-empty tree is labelled by $1$. 
\begin{definition}
  \label{labelling}
  \cite[(2.5e)]{But21}
  For $n\geq 1$
  we denote by $\mathbf T^\sharp_n$ 
  the set of labelled rooted trees 
  $\tau$ of order $n$  with
  vertex sequence $V =\{1,\ldots,n\}$,
  written as $\tau = (V,E, 1)$, 
  such that the label of every vertex is smaller than that
  of each of its children.
  We also let $\mathbf T^\sharp := \bigcup_{n\geq 1} \mathbf T^\sharp_n$,
  and define the canonical forgetful map
  $\iota: \mathbf T^\sharp \to \mathbf T$
  by ``forgetting'' the labeling information contained
  in a labeled tree.
\end{definition}
\noindent
We note that the labelling of a tree $\tau$ is not necessarily unique.
 By abuse of notation, we shall omit the map $\iota$ when there is no ambiguity,
  in which case we will simply use $\tau$ to denote $\iota(\tau)$ for $\tau \in \mathbf T^\sharp$. 
\begin{prop}
  \label{fjlk13}
  \cite[Theorem~2.5F]{But21}
  The number of all possible labellings of a rooted tree
  $\tau = (V,E, \bigcdot)$
  is given by the coefficient $\alpha (\tau )$ defined in
  \eqref{fjkld34}.
\end{prop}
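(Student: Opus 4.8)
The plan is to prove the identity by induction on the structure of $\tau$, using the recursive definitions of $\sigma$, $\tau!$ and the $B^+$ operation. Writing $L(\tau)$ for the number of admissible labellings of $\tau$ in the sense of Definition~\ref{labelling}, I first observe that $L$ depends only on the isomorphism class of the rooted tree, and not on the particular set of labels used, since the monotonicity constraint involves only the relative order of the labels; this is what makes the induction well posed. The base case $\tau = \bigcdot$ is immediate, since the root must carry the smallest label, giving $L(\bigcdot) = 1 = \alpha(\bigcdot)$.

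For the inductive step I would write $\tau = [\tau_1^{k_1}, \ldots, \tau_r^{k_r}]$ with $\tau_1, \ldots, \tau_r$ pairwise distinct, and set $n = |\tau|$, $n_i = |\tau_i|$, so that $n = 1 + \sum_{i=1}^r k_i n_i$. In any admissible labelling the root receives the smallest label, necessarily $1$; deleting the root then decomposes the labelled tree into the forest of its root subtrees, which form an unordered family of $k_i$ labelled trees of shape $\tau_i$ whose label sets partition $\{2, \ldots, n\}$. Conversely such a family, together with the rule of reattaching each component to the root, reconstructs the labelled tree, so counting admissible labellings of $\tau$ reduces to counting these unordered families.

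To count them I would first count ordered families: distributing the $n-1$ remaining labels into ordered blocks of sizes $n_1$ (repeated $k_1$ times), \ldots, $n_r$ (repeated $k_r$ times) contributes the multinomial factor $(n-1)!/\prod_{i=1}^r (n_i!)^{k_i}$, while equipping each block of shape $\tau_i$ with an admissible labelling contributes $\prod_{i=1}^r \alpha(\tau_i)^{k_i}$ by the induction hypothesis. Since the blocks carry disjoint label sets, the components of a family are pairwise distinct, so each unordered family arises from exactly $\prod_{i=1}^r k_i!$ ordered ones; dividing yields
\[
L(\tau) = \frac{(n-1)!}{\prod_{i=1}^r k_i! \, (n_i!)^{k_i}} \prod_{i=1}^r \alpha(\tau_i)^{k_i}.
\]
It then remains to compare this with $\alpha(\tau)$: substituting $\tau! = n \prod_i (\tau_i!)^{k_i}$ and $\sigma(\tau) = \prod_i k_i!\,\sigma(\tau_i)^{k_i}$ from Definition~\ref{fjlkf4} into \eqref{fjkld34}, and using $\tau_i!\,\sigma(\tau_i) = n_i!/\alpha(\tau_i)$, reproduces exactly the right-hand side above, closing the induction.

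The step I expect to be most delicate is the passage from ordered to unordered families, i.e. the division by $\prod_i k_i!$: this is precisely where the symmetry factor $\sigma(\tau)$ enters, and it must be justified that a labelled tree is determined by its edge set, so that interchanging two isomorphic root subtrees \emph{without} also exchanging their labels does not produce a new labelling. The disjointness of the label blocks guarantees that the permutation action on equal-shape components is free, which makes the division exact; this is the only point in the argument where one must reason rather than compute.
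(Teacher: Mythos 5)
Your proof is correct, but there is nothing in the paper to compare it against: Proposition~\ref{fjlk13} is quoted from \cite[Theorem~2.5F]{But21} and never proved internally, so your structural induction is in effect supplying the missing argument (and it is essentially the standard one behind that citation). The counting is sound: the root must carry label $1$; distributing the remaining $n-1$ labels over the root subtrees gives the multinomial factor $(n-1)!/\prod_i (n_i!)^{k_i}$ times $\prod_i \alpha(\tau_i)^{k_i}$ for \emph{ordered} families; and since Definition~\ref{labelling} identifies a labelled tree with its edge set on $\{1,\ldots,n\}$, two ordered families differing by a permutation of equal-shape components determine the same labelled tree, while the disjointness of the nonempty label blocks makes that permutation action free, so the division by $\prod_i k_i!$ is exact --- you correctly single this out as the point where $\sigma(\tau)$ enters. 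The algebraic closing step, via $\tau! = n\prod_i(\tau_i!)^{k_i}$, $\sigma(\tau)=\prod_i k_i!\,\sigma(\tau_i)^{k_i}$ and $\tau_i!\,\sigma(\tau_i)=n_i!/\alpha(\tau_i)$, checks out. It is also worth noting that your per-shape count strictly refines what the paper's own machinery delivers: Lemma~\ref{lemma-label} (the grafting bijection between $\mathbf T^\sharp_n$ and $\triangle_{n-1}$) and the ensuing Corollary yield only the aggregate identity $\sum_{\tau\in\mathbf T_n}\alpha(\tau)=|\mathbf T^\sharp_n|=(n-1)!$, whereas Proposition~\ref{fjlk13} concerns the number of labellings of each individual shape $\tau$, which is exactly what your induction establishes; one could not recover it from the paper's lemma without an argument like yours.
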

\noindent
 As a consequence of Proposition~\ref{fjlk13},
 since the labelling of a tree does not affect its elementary differential, we can rewrite \eqref{Butcher} and \eqref{der-x}
 respectively as 
\begin{equation}
  \label{Butcher-2}
  x(t) = \sum_{\tau\in\mathbf T^\sharp} \frac{(t-t_0)^{|\tau|}}{|\tau|!} F(\tau)(x_0)
  \quad
  \mbox{and}
  \quad
  x^{(n)}(t) = \sum_{\tau\in\mathbf T^\sharp_n} F(\tau)(x(t)), \quad n\geq 1, 
\end{equation}
 see also \cite[Theorem~II.2.6]{HNW93}. 
 Next, we define a new product on labelled trees
 that generalizes the beta-product 
 \cite[Section 2.1]{But21}, cf. 
 \S~1.5 of \cite{chapoton}. 
\begin{definition}[Grafting product]
  \nonumber 
  Let $\tau_1 = (V_1,E_1,1)$ and $\tau_2= (V_2,E_2,1)$ be two labelled trees,
  and let $l\in V_1 = \{1,\ldots, |\tau_1|\}$.
  \begin{itemize}
    \item 
      The grafting product with label $l$ of $\tau_1$ and $\tau_2$, denoted by $\tau_1 *_l \tau_2$, is the tree of order $|\tau_1| + |\tau_2|$ formed by grafting (attaching) $\tau_2$ from its root to the vertex $l$ of $\tau_1$, so that the vertices of $\tau_2$ become descendants of the vertex $l$.
    \item
      The tree $\tau_1 *_l \tau_2$ is labelled by keeping the labels of $\tau_1$, and by adding $|\tau_1|$ to the labels of $\tau_2$.
  \end{itemize}
\end{definition}
\noindent
 For any labelled tree $\tau$, we let $\emptyset *_0 \tau = \tau *_l \emptyset = \tau$ for all $0\leq l\leq |\tau|$, and keep the labels of $\tau$.
\begin{remark}
  \begin{enumerate}[(i)]
  \item The beta-product
        is a grafting product
    with label $1$,
    as the second tree is always attached to the root of the first one.
 \item  The $B^+$ operation can also be expressed by grafting-products, by forgetting labelling. For example, we have $[\tau_1,\tau_2]= \bigcdot *_1 \tau_1 *_1 \tau_2 = \bigcdot *_1 \tau_2 *_1 \tau_1$.
  \end{enumerate}
\end{remark}
\noindent 
 We note that any labelling is equivalent to a sequence of grafting of dots.
 In the next lemma we let $\triangle_0 := \{0\}$, and 
\begin{equation*}
  \triangle_n := \{(l_1, \ldots, l_n) \ : \ 1 \leq l_i \leq i, \ i=1,\ldots,n \}, \quad n\geq 1. 
\end{equation*}
\begin{lemma}
  \label{lemma-label}
  \begin{enumerate}[(i)] 
  \item
    Given $\tau$ a labelled tree with $|\tau|\ge2$, there is a unique sequence
    $(l_i)_{1 \leq i \leq |\tau| -1}$ in $\triangle_{|\tau|-1}$ such that 
  \begin{equation}\label{eqn-2}
    \tau = \bigcdot *_{l_1} \bigcdot *_{l_2} \cdots *_{l_{|\tau|-1}} \bigcdot.
  \end{equation}
\item
  For any $n\geq 2$, the map 
  which sends
    $\tau\in \mathbf T^\sharp_n$
  to the sequence $(l_1, \ldots, l_{n-1})$ determined by \eqref{eqn-2} is a bijection
  from $\mathbf T^\sharp_n$ to $\triangle_{n-1}$. 
  \end{enumerate}
 \end{lemma}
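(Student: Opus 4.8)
The plan is to show that grafting dots realizes the standard recursive coding of increasing (labelled) trees by a sequence, by carefully tracking how labels and the parent structure evolve. Reading \eqref{eqn-2} left-associatively (which is forced by the constraint $1\le l_i\le i$, since at each step the partial product must already have a vertex labelled $l_i$), the central observation I would isolate first is the following: in $\bigcdot *_{l_1}\bigcdot *_{l_2}\cdots *_{l_{i}}\bigcdot$, the partial product after $i$ graftings is a labelled tree on the vertex set $\{1,\ldots,i+1\}$, and for each $2\le j\le i+1$ the newly created vertex $j$ was grafted at step $j-1$ onto vertex $l_{j-1}$. This is immediate from the definition of $*_l$: grafting the dot $\bigcdot$ onto a tree of order $k$ adds $k$ to the unique label $1$ of the dot, producing the new largest label $k+1$ and making it a child of vertex $l$; hence the graftings assign the labels $2,3,\ldots$ in order, and the parent of vertex $j$ is exactly $l_{j-1}$. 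In particular $l_{j-1}\le j-1<j$, so every partial product is increasing, i.e.\ lies in $\mathbf T^\sharp$.

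For part (i), given $\tau=(V,E,1)\in\mathbf T^\sharp$ with $|\tau|\ge 2$, I would \emph{define} $l_i$ to be the label of the parent of vertex $i+1$ in $\tau$, for $1\le i\le|\tau|-1$. Because $\tau$ is increasing, that parent carries a label strictly smaller than $i+1$, so $1\le l_i\le i$ and $(l_1,\ldots,l_{|\tau|-1})\in\triangle_{|\tau|-1}$. By the observation above, the tree $\bigcdot *_{l_1}\cdots *_{l_{|\tau|-1}}\bigcdot$ has vertex set $\{1,\ldots,|\tau|\}$, root $1$, and the parent of each vertex $i+1$ equal to $l_i$; since a rooted labelled tree is completely determined by its vertex set together with the parent of each non-root vertex, this grafted tree coincides with $\tau$, giving \eqref{eqn-2}. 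Uniqueness is then automatic: if any $(l_i)\in\triangle_{|\tau|-1}$ satisfies \eqref{eqn-2}, the observation forces $l_i$ to equal the parent of vertex $i+1$ in the resulting tree, namely in $\tau$, which pins down the sequence.

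Part (ii) follows by packaging (i) as mutually inverse maps. Let $\Phi:\mathbf T^\sharp_n\to\triangle_{n-1}$ send $\tau$ to its parent sequence $(l_1,\ldots,l_{n-1})$, and let $\Psi:\triangle_{n-1}\to\mathbf T^\sharp_n$ send a sequence to $\bigcdot *_{l_1}\cdots *_{l_{n-1}}\bigcdot$. The preliminary observation guarantees $\Psi$ is well defined and lands in $\mathbf T^\sharp_n$, because $l_i\le i$ makes each grafting legal and the output is increasing. Then $\Psi\circ\Phi=\mathrm{id}$ is precisely the reconstruction of part (i), while $\Phi\circ\Psi=\mathrm{id}$ is the statement that the parent of vertex $i+1$ in $\Psi(l_1,\ldots,l_{n-1})$ is $l_i$. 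Hence $\Phi$ is a bijection.

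I expect the only delicate point to be the bookkeeping of labels under iterated grafting: confirming that the shift by $|\tau_1|$ at each step assigns exactly the labels $2,3,\ldots,n$ in order, so that "vertex $i+1$" in $\tau$ really is the vertex created at step $i$, and that the increasing property is preserved throughout. Once this is nailed down, both the reconstruction and the bijection are formal consequences; no estimate or nontrivial combinatorial identity is required, and the count $|\triangle_{n-1}|=(n-1)!$ serves only as a consistency check rather than part of the argument.
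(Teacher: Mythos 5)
Your proposal is correct and takes essentially the same approach as the paper: both proofs rest on identifying $l_i$ as the label of the parent of the vertex labelled $i+1$, so that \eqref{eqn-2} is exactly the parent-array coding of increasing trees. The only difference is organizational --- the paper packages the reconstruction as an induction on $|\tau|$ that peels off the highest-labelled vertex, while you unroll that induction into a direct verification that the grafting map and the parent-sequence map are mutually inverse.
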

\begin{proof}
  We prove $(i)$ by induction on $|\tau|$. The case $|\tau|=2$
  is verified since $\tau = \bigcdot *_1 \bigcdot$.
  Suppose that \eqref{eqn-2} holds for all trees $\tau$ such that $|\tau|=n$,
  and let $\tau$ be a labelled tree with $|\tau|=n+1$. Denote by $\tau_-$ the subtree obtained by removing the vertex with label $n+1$ from $\tau$. It is clear from Definition~\ref{labelling} that the parent of the vertex $n+1$ has label $l$ not bigger than $n$,
   hence $\tau = \tau_- *_l \bigcdot$ has the form \eqref{eqn-2}.
   The converse of $(i)$ holds, since for each $n\ge2$, the sequence $(l_1, \ldots, l_{n-1}) \in\triangle_{n-1}$ determines a unique tree
   $\bigcdot *_{l_1} \bigcdot *_{l_2} \cdots *_{l_{n-1}} \bigcdot$
   in $\mathbf T^\sharp_n$. 
 Assertion $(ii)$ follows from $(i)$.
\end{proof}
 \noindent
 The next result is
 a consequence of Lemma~\ref{lemma-label}-$(ii)$. 
 \begin{corollary}
 The number of labelled trees of order $n\geq 1$  is given by 
$$
  |\mathbf T^\sharp_n| = \sum_{\tau \in \mathbf T^\sharp_n} 1 = \sum_{\tau \in \mathbf T_n} \alpha(\tau) = (n-1)!.
  $$
\end{corollary}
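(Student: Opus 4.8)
The plan is to read the asserted chain of three equalities and prove each link separately, since the first is definitional, the second is a fibre-counting argument, and the third is a direct enumeration. The equality $|\mathbf T^\sharp_n| = \sum_{\tau \in \mathbf T^\sharp_n} 1$ is merely the definition of cardinality and needs no argument, so the real content lies in the remaining two links, both of which can be read off from results already established earlier in this section.

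To obtain $\sum_{\tau \in \mathbf T^\sharp_n} 1 = \sum_{\tau \in \mathbf T_n} \alpha(\tau)$, I would partition $\mathbf T^\sharp_n$ according to the forgetful map $\iota:\mathbf T^\sharp \to \mathbf T$ of Definition~\ref{labelling}. Since every labelled tree of order $n$ has an underlying unlabelled tree in $\mathbf T_n$, the fibres $\iota^{-1}(\tau)$ for $\tau \in \mathbf T_n$ form a disjoint cover of $\mathbf T^\sharp_n$. The fibre $\iota^{-1}(\tau)$ is exactly the set of admissible labellings of $\tau$, whose cardinality equals $\alpha(\tau)$ by Proposition~\ref{fjlk13}. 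Summing over the fibres then yields $|\mathbf T^\sharp_n| = \sum_{\tau\in\mathbf T_n}|\iota^{-1}(\tau)| = \sum_{\tau\in\mathbf T_n}\alpha(\tau)$.

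For the final equality $|\mathbf T^\sharp_n| = (n-1)!$, I would invoke the bijection of Lemma~\ref{lemma-label}-$(ii)$. For $n \ge 2$ this bijection identifies $\mathbf T^\sharp_n$ with $\triangle_{n-1}$, so that $|\mathbf T^\sharp_n| = |\triangle_{n-1}|$, and a direct count of $\triangle_{n-1} = \{(l_1,\ldots,l_{n-1}) : 1\le l_i \le i\}$ shows that each $l_i$ ranges over $i$ independent values, whence $|\triangle_{n-1}| = \prod_{i=1}^{n-1} i = (n-1)!$. The case $n=1$ is checked by hand: $\mathbf T^\sharp_1 = \{\bigcdot\}$ gives count $1 = 0!$, consistent with $\triangle_0 = \{0\}$, and closes the chain since $\sum_{\tau\in\mathbf T_n}\alpha(\tau) = |\mathbf T^\sharp_n| = (n-1)!$.

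None of the steps is genuinely hard, since the structural work has already been carried out in Lemma~\ref{lemma-label} and Proposition~\ref{fjlk13}; the enumeration of $\triangle_{n-1}$ is a one-line product count. The only point demanding care is the fibre argument: one must confirm that distinct admissible labellings really yield distinct elements of $\mathbf T^\sharp_n$ and that every element of $\mathbf T^\sharp_n$ is an admissible labelling of its image, so that the fibres genuinely have size $\alpha(\tau)$. This is precisely the content of Proposition~\ref{fjlk13}, so the corollary follows with essentially no additional effort.
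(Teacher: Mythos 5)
Your proof is correct, but it takes a genuinely different route from the one the paper actually writes out. Your argument is purely combinatorial: the middle equality comes from partitioning $\mathbf T^\sharp_n$ into the fibres of the forgetful map $\iota$ and counting each fibre by Proposition~\ref{fjlk13}, and the last equality comes from the bijection $\mathbf T^\sharp_n \to \triangle_{n-1}$ of Lemma~\ref{lemma-label}-$(ii)$ together with the product count $|\triangle_{n-1}| = \prod_{i=1}^{n-1} i = (n-1)!$. This is precisely the route the paper alludes to in the sentence introducing the corollary (``a consequence of Lemma~\ref{lemma-label}-$(ii)$''), but the proof the paper actually gives is deliberately independent of that lemma: it is analytic rather than enumerative. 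Namely, the paper applies the derivative formula \eqref{der-x} to the ODE \eqref{ODE} with $f(x) := e^x$, $x_0 := 0$, $t_0 := 0$; by \eqref{elem-diff} every elementary differential then satisfies $F(\tau)(x_0) = 1$ for $\tau \in \mathbf T_n$, so that $x^{(n)}(0) = \sum_{\tau\in\mathbf T_n}\alpha(\tau)$, while the explicit solution $x(t) = -\log(1-t)$ has $x^{(n)}(t) = (n-1)!\,(1-t)^{-n}$, yielding $\sum_{\tau\in\mathbf T_n}\alpha(\tau) = (n-1)!$. Note that both routes still rely on Proposition~\ref{fjlk13} for the identification $|\mathbf T^\sharp_n| = \sum_{\tau\in\mathbf T_n}\alpha(\tau)$; they differ in how the value $(n-1)!$ is obtained. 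Your approach buys a self-contained structural explanation (an explicit bijection with label sequences) that stays entirely inside the combinatorics of Section~\ref{s3}; the paper's approach buys a cross-check of the tree-counting coefficients against the analytic Butcher-series machinery, which is valuable precisely because it does not presuppose the grafting-sequence bijection on which the rest of the paper's probabilistic construction rests.
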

\begin{Proof} 
  For completeness, we provide a proof
  that does not rely on Lemma~\ref{lemma-label}-$(ii)$.
  By \eqref{der-x} and \eqref{fjkld34} we have
  $$
  x^{(n)}(t_0) = \sum_{\tau\in\mathbf T_n} \alpha(\tau) F(\tau)(x_0).
  $$ 
 Letting $f(x) := e^x$, $x_0:=0$ and $t_0:=0$, 
  by \eqref{elem-diff} we have $F(\tau) = f^n$ for all $\tau\in \mathbf T_n$.
  Hence, the solution
  $x(t) = -\log (1-t)$
  of \eqref{ODE} satisfies $x^{(n)}(0 ) = \sum_{\tau\in\mathbf T_n} \alpha(\tau)$,
  and it remains to note that $x^{(n)}(t) = (n-1)! (1-t)^{-n}$, $n\geq 1$. 
\end{Proof} 
 
\section{Random sampling of Butcher trees}
\label{s4}
\noindent
In this section we discuss the representation of solutions
to \eqref{ODE} by the random generation of Butcher trees.   
\begin{definition}
  \label{fjkldf} 
 Given $(p_n)_{n\geq 0}$ a probability distribution on $\inte$
 such that $p_n>0$ for all $n\geq 0$, 
 we generate a random labelled tree $\mathcal T$
 by uniform attachment, as follows. 
 \begin{enumerate}[i)]
 \item
   Choose the order of $\mathcal T$ with the
   distribution $\P( |\mathcal T| = n ) = p_n$, $n\geq 0$; 
\item Start from a root $\bigcdot$ with the label $1$;
\item Starting from a tree $\tau$ with order $l$,
   $1\leq l \leq n-1$,
   attach a new vertex with label $l+1$ to an
  independently and uniformly chosen vertex of $\tau$,
  and repeat this step inductively until we reach the given order $n$. 
\end{enumerate} 
\end{definition}
\noindent 
 For $n\geq 0$, we let 
 \begin{equation}
\nonumber 
 q^\sharp_n(\tau):= \P\left( \mathcal T = \tau \mid |\mathcal T| = n \right), \quad \tau\in \mathbf T^\sharp_n, 
\end{equation}
denote the conditional distribution of $\mathcal T$ given its
size is $n\geq 0$. 
We note that
 $\iota ( \mathcal T)$ is $\mathbf T$-valued,
and its conditional distribution on $\mathbf T$ is given by 
\begin{equation} 
  \label{random-tree-1}
  q_n(\tau) :=
  \P\left(
  \iota ( \mathcal T)
  = \tau \mid |\mathcal T| = n \right)
=
\sum_{
     \tau' \in \mathbf{T}^\sharp_n
    \atop
    \iota ( \tau' ) = \tau
  }
  q^\sharp_n(\tau'), \quad \tau\in \mathbf{T}_n. 
\end{equation}
 By Lemma~\ref{lemma-label}-$(ii)$, the 
random labelled tree $\mathcal T$
generated in Definition~\ref{fjkldf}
 takes the form 
    \begin{equation*}
    \mathcal T = \bigcdot *_{\eta_1} \bigcdot *_{\eta_2} \cdots *_{\eta_{|\mathcal T|-1}} \bigcdot, 
    \end{equation*}
 where $(\eta_1,\ldots, \eta_{|\mathcal T|-1})$
 is a uniform random variable taking values in
 $\triangle_{|\mathcal T|-1}$. 
\begin{theorem} 
\label{2jkldf}
Assume that there exists $C>0$
    such that
    \begin{equation}
      \label{fjlk3} 
           |\nabla^m f(x_0)|\leq C,
    \quad \mbox{for all } m\geq 0. 
\end{equation} 
 Then, the solution of the ODE \eqref{ODE} admits the probabilistic
    expression  
    \begin{equation}
      \label{random-Butcher}
  x(t) = \E \left[
 \frac{(t-t_0)^{|\mathcal T|} F(\mathcal T)(x_0) }{(|\mathcal T|\vee 1)p_{|\mathcal T|}}
 \right],
  \qquad t\in [t_0,t_0 + 1/C).
    \end{equation}
  \end{theorem}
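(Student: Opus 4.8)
The plan is to evaluate the right-hand side of \eqref{random-Butcher} by conditioning on the size $|\mathcal T|$ and matching the outcome term by term with the labelled Butcher series \eqref{Butcher-2}. The first step is to record the conditional law of $\mathcal T$ given $|\mathcal T|=n$. Using the representation $\mathcal T = \bigcdot *_{\eta_1} \cdots *_{\eta_{|\mathcal T|-1}} \bigcdot$ with $(\eta_1,\ldots,\eta_{|\mathcal T|-1})$ uniform on $\triangle_{|\mathcal T|-1}$, together with the bijection of Lemma~\ref{lemma-label}-$(ii)$ and the count $|\mathbf T^\sharp_n| = (n-1)!$, the conditional distribution is uniform, namely $q^\sharp_n(\tau) = 1/(n-1)!$ for every $\tau\in\mathbf T^\sharp_n$, $n\geq 1$.

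The key estimate, which I expect to be the main obstacle, is a uniform bound on the elementary differentials under \eqref{fjlk3}. Interpreting $|\nabla^m f(x_0)|$ as the norm of the $m$-th derivative tensor of $f$ at $x_0$, so that $|F([\tau_1,\ldots,\tau_m])(x_0)| \leq |\nabla^m f(x_0)| \prod_{i=1}^m |F(\tau_i)(x_0)| \leq C\prod_{i=1}^m |F(\tau_i)(x_0)|$, I would prove by induction on $|\tau|$ that $|F(\tau)(x_0)| \leq C^{|\tau|}$ for every $\tau\in\mathbf T$ with $|\tau|\geq 1$. The base case is $|F(\bigcdot)(x_0)| = |f(x_0)| \leq C$ by \eqref{fjlk3} with $m=0$, and the inductive step follows from the identity $\sum_{i=1}^m |\tau_i| = |\tau|-1$. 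The delicate point is to fix a consistent tensor norm so that \eqref{fjlk3} supplies exactly this multilinear bound.

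With this estimate in hand, I would establish absolute convergence in order to justify interchanging expectation and summation. Taking absolute values and conditioning on the size gives $\E\big[((t-t_0)C)^{|\mathcal T|}/(|\mathcal T|\vee 1)\big] = 1 + \sum_{n\geq 1} ((t-t_0)C)^n/n$, which is finite precisely when $(t-t_0)C<1$, that is for $t\in[t_0,t_0+1/C)$. This is exactly where the stated interval originates, the right endpoint being excluded because $\sum_n 1/n$ diverges.

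Finally, the absolute convergence legitimizes writing the expectation as $\sum_{n\geq 0} \big((t-t_0)^n/(n\vee 1)\big)\, \E[F(\mathcal T)(x_0)\mid |\mathcal T|=n]$. The term $n=0$ contributes $F(\emptyset)(x_0)=x_0$, while for $n\geq 1$ the uniform conditional law turns the conditional expectation into $(1/(n-1)!)\sum_{\tau\in\mathbf T^\sharp_n} F(\tau)(x_0)$, so that the $n$-th term equals $((t-t_0)^n/n!)\sum_{\tau\in\mathbf T^\sharp_n} F(\tau)(x_0)$. Summing over $n$ and recognizing \eqref{Butcher-2}, with the $x_0$ term supplied by the empty tree, yields $x(t)$ and completes the proof.
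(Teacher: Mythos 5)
Your proposal is correct and follows essentially the same route as the paper's proof: the uniform conditional law of $\mathcal T$ on $\mathbf T^\sharp_n$ via Lemma~\ref{lemma-label}-$(ii)$, the bound $|F(\tau)(x_0)|\leq C^{|\tau|}$ under \eqref{fjlk3}, conditioning on $|\mathcal T|=n$ to recover the labelled series \eqref{Butcher-2}, and the convergence of $\sum_{n\geq 1}(C(t-t_0))^n/n$ to justify the interval $[t_0,t_0+1/C)$. The only differences are cosmetic: you prove the elementary-differential bound by induction where the paper asserts it, and you place the integrability argument before rather than after the identity.
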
 
  \begin{Proof} 
    It follows from Lemma \ref{lemma-label}-$(ii)$ that
 given $|\mathcal T| = n$, 
    the random tree $\mathcal T$ is uniformly distributed in $\mathbf T^\sharp_n$, i.e. we have 
\begin{equation*}
  q^\sharp_n(\tau) = \P\left( \mathcal T = \tau \mid |\mathcal T| = n \right)
  = \frac{1}{|\mathbf T^\sharp_n|} = \frac{1}{( (n-1)\vee 0 )!},
\end{equation*}
in which case $q^\sharp_n(\tau)$ is independent of $\tau\in\mathbf T^\sharp_n$,
and the conditional probability \eqref{random-tree-1} is given by
\begin{equation*}
  q_n(\tau) = \frac{\alpha(\tau)}{ ( (n-1)\vee 0 )!},
   \qquad \tau\in \mathbf T. 
\end{equation*}
 Hence, we have 
\begin{align}
 \nonumber 
\E \left[
 \frac{(t-t_0)^{|\mathcal T|} F(\mathcal T)(x_0) }{(|\mathcal T|\vee 1)p_{|\mathcal T|}}
 \right]
 &= \sum_{n=0}^\infty
\frac{(t-t_0)^n p_n}{(n\vee 1)p_n} 
    \sum_{\tau\in \mathbf T_n^\sharp }
    q_n^\sharp (\tau ) \E \left[ F(\mathcal T)(x_0) \mid |\mathcal T| = n, \mathcal T=\tau \right] 
  \\
  \nonumber
  & = 
  \sum_{\tau\in\mathbf T^\sharp} \frac{(t-t_0)^{|\tau|}}{|\tau|!} F(\tau)(x_0)
\\
 \label{prob-rep-Butcher-2}
  & = 
    x(t), 
\end{align} 
 by the first equation of \eqref{Butcher-2}. 
 From the assumption \eqref{fjlk3}, we have
 $|F(\tau)(x_0)| \leq C^{|\tau|}$ for all $\tau\in\mathbf T$
 such that $|\tau |\geq 1$. The
 $q$-$th$ integrability of \eqref{prob-rep-Butcher-2},
 $q\geq 1$, can be implied by the bound 
\begin{eqnarray} 
  \nonumber
    \E \left[
  \left|
  \frac{(t-t_0)^{|\mathcal T|} F(\mathcal T)(x_0) }{(|\mathcal T|\vee 1)p_{|\mathcal T|}}
  \right|^q
\right]
 & \le & 
\frac{|x_0|^q}{p_0^{q-1}}
+ \sum_{n=1}^\infty \frac{( C (t-t_0))^{nq} }{n ^qp_n^{q-1}} 
\sum_{\tau\in \mathbf T^\sharp_n} q^\sharp_n(\tau) 
\\
\label{jkfl3}
 & = & \frac{|x_0|^q}{p_0^{q-1}}
 + \sum_{n=1}^\infty \frac{(C(t-t_0))^{nq}}{n^q p_n^{q-1}}, 
\end{eqnarray} 
 which is finite for $q=1$, provided that $C(t-t_0)<1$.
\end{Proof} 
   \noindent
  The random generation of Butcher trees in
   Theorem~\ref{2jkldf} is implemented in
   the following Mathematica code: 

   \smallskip
    
\begin{lstlisting}[language=Mathematica]
  MCsample[f_, t_, x0_, dist_] := (n = RandomVariate[dist]; 
   If[n == 0, Return[x0/PDF[dist, 0]], 
    If[n == 1, Return[t*f[x0]/PDF[dist, 1]], g = Graph[{1 -> 2}]; 
     g = Graph[g, VertexLabels -> {1 -> D[f[ y], y]}]; 
     g = Graph[g, VertexLabels -> {2 -> f[y]}]; m = 1; 
     While[m <= (n - 2), l = VertexCount[g]; 
      j = RandomVariate[DiscreteUniformDistribution[{1, l}]]; 
      g = VertexAdd[g, {l + 1}]; 
      g = Graph[g, VertexLabels -> {l + 1 -> f[ y]}]; 
      lab = Sort[List @@@ PropertyValue[g, VertexLabels]][[j]][[2]];
      g = Graph[g, VertexLabels -> {j -> D[lab, y]}]; 
      g = EdgeAdd[g, j -> l + 1]; m++]; 
     sample = Product[ff[[2]] , {ff, 
         List @@@ PropertyValue[g, VertexLabels]}] /. {y -> x0}; 
      Return[sample*t^n/PDF[dist, n]/n]]]);
f[y_] := Exp[y]
MCsample[f, t, x0, GeometricDistribution[0.5]]
\end{lstlisting}

\section{Connection with semilinear PDEs} 
\label{s5}
\noindent
In this section, we consider the case where
the function $f$ in \eqref{ODE} involves a linear component,
i.e. $f(x) = Ax + g(x)$, where $A$ is a linear operator on $\R^d$,
in which case the ODE \eqref{ODE} becomes
\begin{equation}\label{sl-ODE}
\begin{cases}
 \dot x(t) = Ax(t) + g(x(t)), & t \in (t_0,T],
  \smallskip
  \\
    x(t_0) = x_0\in\R^d,  &
  \end{cases}
\end{equation}
 and can be rewritten in integral form as 
\begin{equation*}
  x(t) = e^{(t-t_0)A} x_0 + \int_{t_0}^t e^{(t-s)A} g(x(s)) ds,
  \quad
  t\in (t_0,T]. 
\end{equation*}
\noindent
 By \cite[Theorem 4.5]{LO13} we have 
\begin{equation}
\label{fjkl1} 
 x(t) = 
 \sum_{\tau \in \mathbf T } \alpha(\tau) \phi_{|\tau|}(t,A) F_g (\tau) (x_0),
\end{equation}
 where
 $F_g$ is defined in \eqref{fg},
 with $\phi_0(t,a) := e^{(t-t_0)a}$ and 
\begin{equation} 
  \label{phi}
  \phi_n(t,a) := 
\int_{t_0\leq t_1< \cdots <t_n\leq t} e^{(t-t_n)a} dt_n \cdots dt_1
= 
    \int_{t_0}^t e^{(t-s)a} \frac{(s-t_0)^{n-1}}{(n-1) !} ds , 
\end{equation} 
 $n\geq 1$, $t \geq t_0$.
 In addition, from the fact that
labelling does not change elementary differentials,
the expansion \eqref{fjkl1} can be rewritten as the exponential Butcher series 
 \begin{equation}\label{exp-B-series}
  x(t) 
  = \sum_{\tau\in\mathbf T^\sharp} \phi_{|\tau|}(t,A) F_g (\tau)(x_0).  
\end{equation}
\indent
 Given $(N_t)_{t\ge t_0}$ 
  a standard Poisson process with
 $$
 \P (N_t = n) = e^{-(t-t_0)} \frac{(t-t_0)^n}{n!}, 
 \quad
 t\geq t_0, \ n\geq 0,
 $$ 
and increasing sequence of jump times
$(T_i)_{i\geq 1}$, and let $T_0 = t_0$,
let ${\cal T}_t$ denote the random tree constructed
 in Definition~\ref{fjkldf},
 using the Poisson distribution $p_n = \P (N_t = n)$, $n\geq 0$.
 In what follows, we assume that $A$ is a stochastic matrix, that is, a square matrix with non-negative entries where each column sums up to $1$, which generates a continuous-time Markov chain $X = (X_t)_{t\ge t_0}$, independent of
   $(N_t)_{t\ge t_0}$. 

 \medskip

  In Theorem~\ref{djk13} we propose a canonical way to
 evaluate the solution to the semilinear equation \eqref{sl-ODE}
 as an expected value over random trees.
 It is worth noting that
   the decomposition $f(x) = Ax + g(x)$ 
   can be used for a generalization to
   semilinear parabolic PDEs,
   in which case $A$ is an elliptic operator that
   can generate a Markov process $X = (X_t)_{t\ge t_0}$,
   and the discrete $\{1,\ldots , d\}$-valued
   index $i$ is replaced by the spatial variable of the PDE. 
 This can also be regarded as a randomization of the 
 exponential Butcher series \eqref{fjkl1}, 
 and as a nonlinear extension of
 the probabilistic representation of \cite{dalang}
 which uses linear chains for linear PDEs. 
 In the special case $A=0$, this
 probabilistic representation recovers \eqref{random-Butcher} 
 by generating tree sizes via the Poisson distribution 
 $(p_n)_{n\geq 0}$ with parameter $t-t_0$.
 \begin{theorem}
  \label{djk13}
  Assume that $A$ is a stochastic matrix and there exists $C>0$
    such that
    \begin{equation}
      \label{fjlk3-0} 
       |\nabla^m g(x_0)| + |Ax_0| + |A| \leq C,
    \quad \mbox{for all } m\geq 0. 
\end{equation} 
  Then, for $t\in [t_0,t_0+1/C)$ we have
  \begin{equation}
    \label{fjklf} 
    x_i (t) = e^{(t-t_0)} \E\big[ ((|{\cal T}_t|-1)\vee 0 )!
      \left( F_g ({\cal T}_t)(x_0) \right)_{X_{t-T_{|{\cal T}_t|}}}
      \mathbf{1}_{\{T_{|\mathcal T_t|}\le t\}}
        \ \! \big| \ \!  X_{t_0} = i \big],
\end{equation} 
 $i =1,\ldots , d$.
\end{theorem}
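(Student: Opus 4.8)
The plan is to transform the exponential Butcher series \eqref{exp-B-series} (equivalently \eqref{fjkl1}) term by term into the expectation \eqref{fjklf}, the three new ingredients being the uniform law of $\mathcal T_t$ over the labelled trees of a given size, a Poisson representation of the coefficient $\phi_n(t,A)$ of \eqref{phi}, and a Markov-chain representation of the matrix exponential $e^{(t-T_n)A}$.

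First I would group \eqref{exp-B-series} according to the order $n=|\tau|$ of the labelled tree. As in the proof of Theorem~\ref{2jkldf}, Lemma~\ref{lemma-label}-$(ii)$ shows that, conditionally on $|\mathcal T_t|=n$, the tree $\mathcal T_t$ is uniform on $\mathbf T^\sharp_n$ with $|\mathbf T^\sharp_n|=((n-1)\vee 0)!$, so that
$$\sum_{\tau\in\mathbf T^\sharp_n} F_g(\tau)(x_0)=((n-1)\vee 0)!\,\E\big[F_g(\mathcal T_t)(x_0)\mid |\mathcal T_t|=n\big],$$
and \eqref{exp-B-series} becomes $x(t)=\sum_{n\geq 0}\phi_n(t,A)\,((n-1)\vee 0)!\,\E[F_g(\mathcal T_t)(x_0)\mid |\mathcal T_t|=n]$, which already produces the factorial weight of \eqref{fjklf} (the $n=0$ term being the contribution $e^{(t-t_0)A}x_0$ of the empty tree, with the convention $F_g(\emptyset)=\id$ and $\phi_0(t,A)=e^{(t-t_0)A}$).

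Next I would represent $\phi_n(t,A)$ through the Poisson jump times. Conditioning the process on $N_t=n$ turns $(T_1,\dots,T_n)$ into the order statistics of $n$ i.i.d. uniforms on $[t_0,t]$, so that $T_n$ has density $n(s-t_0)^{n-1}/(t-t_0)^n$ on $[t_0,t]$; comparing with the second expression in \eqref{phi} gives $\phi_n(t,A)=\big((t-t_0)^n/n!\big)\,\E[e^{(t-T_n)A}\mid N_t=n]=e^{t-t_0}p_n\,\E[e^{(t-T_n)A}\mid N_t=n]$, where $p_n=e^{-(t-t_0)}(t-t_0)^n/n!$ and $T_0=t_0$ handles $n=0$. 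Since $\mathcal T_t$ is built from the same process we have $|\mathcal T_t|=N_t$ and $T_{|\mathcal T_t|}\leq t$ almost surely, so the single factor $e^{t-t_0}$ of the statement appears, while the remaining $p_n$ is exactly the weight that converts $\sum_n p_n\,\E[\,\cdot\mid|\mathcal T_t|=n]$ back into an unconditional expectation.

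The final and most delicate step is the Markov-chain representation of $e^{(t-T_n)A}$. Using that $A$ generates the chain $X$ and that, conditionally on $|\mathcal T_t|=N_t=n$, the tree shape, the jump time $T_n$ and the chain $X$ are mutually independent, I would insert under the expectation the identity $(e^{(t-T_n)A}v)_i=\E[v_{X_{t-T_n}}\mid X_{t_0}=i]$, valid for $t-T_n\geq 0$, the indicator $\mathbf 1_{\{T_{|\mathcal T_t|}\leq t\}}$ guaranteeing this range. Taking the $i$-th component and recombining the three conditionally independent factors into one joint expectation yields \eqref{fjklf}. The main obstacle is precisely this bookkeeping: matching the factorial weight, the single $e^{t-t_0}$, the Poisson mass $p_n$, the chain and the indicator so that nothing is double-counted, which rests on the coupling $|\mathcal T_t|=N_t$ and on the threefold independence. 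Interchanging the infinite sum with the expectations requires absolute convergence, and here I would argue exactly as for \eqref{jkfl3}: the bound \eqref{fjlk3-0} yields $|F_g(\tau)(x_0)|\leq C^{|\tau|}$ and $\|e^{(t-s)A}\|\leq e^{(t-s)C}$, whence the $n$-th term is of order $(C(t-t_0))^n/n$, summable exactly for $t\in[t_0,t_0+1/C)$, which also fixes the stated range.
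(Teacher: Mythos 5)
Your proposal is correct and follows essentially the same route as the paper's proof: both convert the exponential Butcher series \eqref{exp-B-series} into an expectation by combining the uniform conditional law of $\mathcal T_t$ on $\mathbf T^\sharp_n$ (via Lemma~\ref{lemma-label}), a Poisson jump-time representation of $\phi_n(t,A)$, the Markov-chain representation of the matrix exponential, the coupling $|\mathcal T_t|=N_t$ with conditional independence of tree shape, jump times and chain, and the same $\sum_{n\geq 1}(C(t-t_0))^n/n$ integrability bound fixing the range $t\in[t_0,t_0+1/C)$. The only cosmetic difference is that you obtain $\phi_n(t,A)=e^{t-t_0}p_n\,\E\big[e^{(t-T_n)A}\mid N_t=n\big]$ from the order-statistics property of Poisson jump times, whereas the paper derives the equivalent identity $\phi_n(t,A)=e^{t-t_0}\E\big[\mathbf 1_{\{N_t=n\}}e^{(t-T_n)A}\big]$ directly from the exponential interarrival density and a change of variables.
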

\begin{Proof} 
 From the fact that the sequence $(T_i-T_{i-1})_{i=1, \ldots, n}$ is i.i.d. with
 common exponential distribution,
 for any integrable function $h$ on the $n$-dimensional simplex 
 $$
 \triangle^n_t := \{(t_1, \ldots , t_n): t_0 \leq t_1 < \cdots <t_n \leq t\},
 $$ 
 we have 
\begin{align*} 
 & 
  \E[ \mathbf{1}_{\{N_t = n \}}
    h(T_1,\ldots,T_n) ]
   \\
  & =  \E\left[
    \mathbf{1}_{\{
      t_0 < T_n \leq t < T_{n+1} 
      \}}
    h\left(T_1,\ldots,t_0 + \sum_{i=1}^n (T_i-T_{i-1}) \right)
     \right]
  \\
  & =  \int_{0< r_1 + \cdots + r_n \leq t-t_0} h\left(t_0 + r_1,\ldots,t_0 + \sum_{i=1}^n r_i \right) e^{-(r_1+\cdots+r_n)}
  \\
  & \qquad \qquad \qquad \qquad  \qquad
  \qquad \qquad \qquad \qquad  
  \int_{t-t_0-(r_1+\cdots+r_n)}^\infty e^{-r_{n+1}} dr_{n+1} dr_n \cdots dr_1
  \\
  & =  e^{-(t-t_0)} \int_{0< r_1 + \cdots + r_n \leq t-t_0} h\left(t_0 + r_1,\ldots,t_0 + \sum_{i=1}^n r_i \right) dr_n \cdots dr_1
  \\
    &=  e^{-(t-t_0)} \int_{t_0\leq t_1 < \cdots <t_n \leq t} h(t_1,\ldots,t_n) dt_n \cdots dt_1,
\end{align*}
where we applied the change of variables $t_i = t_0 + r_1+ \cdots + r_i$ in the last equality. Taking
$h(t_1,\ldots , t_n): = e^{(t-t_n)a}$, 
$t_0\leq t_1 < \cdots < t_n \leq t$, 
 it follows that \eqref{phi} can be rewritten as 
\begin{equation*}
  \phi_n(t,a) = e^{t-t_0} \E\left[
    \mathbf{1}_{\{
      N_t = n
      \}
      }
      e^{(t-T_n)a}  \right], \quad n\geq 0.
\end{equation*}
 Next, by construction of the continuous-time
 Markov chain $(X_t)_{t\ge t_0}$ with generator $A$, we have 
\begin{equation*}
  \big( e^{(t-t_0)A} x \big)_i = \E\left[ x_{X_t} \ \! \big| \ \! X_{t_0} = i \right], \quad i = 1,\ldots, d,
  \ 
  x =(x_1,\ldots , x_d) \in\R^d. 
\end{equation*}
Finally, as the random tree ${\cal T}_t$ is constructed with
 the Poisson random size $N_t$ and independent uniform attachment, we have 
\begin{equation*}
  |{\cal T}_t| = N_t, \qquad \P\left( {\cal T}_t = \tau \mid |{\cal T}_t| = n \right) = \frac{1}{|\mathbf T^\sharp_n|}, \qquad \tau\in\mathbf T^\sharp_n.
\end{equation*}
 Combining the above with \eqref{exp-B-series}, we get 
\begin{eqnarray*}
     x_i(t) & = & e^{t-t_0} \sum_{n=0}^\infty \sum_{\tau\in\mathbf T^\sharp_n} \E\big[
      \mathbf{1}_{ \{ N_t = n \} }
      \big( e^{(t-T_n)A} F_g (\tau)(x_0) \big)_i  \big]
  \\
  &= & e^{t-t_0} \sum_{n=0}^\infty \sum_{\tau\in\mathbf T^\sharp_n} 
  \E\big[
    \mathbf{1}_{ \{ N_t = n \} }
    \left( F_g (\tau)(x_0) \right)_{X_{t-T_n +t_0}} \ \! \big| \ \! X_{t_0} = i \big] \\
    &= & e^{t-t_0} \sum_{n=0}^\infty \sum_{\tau\in\mathbf T^\sharp_n} \E\big[ \left( F_g (\tau)(x_0) \right)_{X_{t-T_n +t_0}} \ \! \big| \ \!  N_t = n, X_{t_0} = i \big] \P(N_t = n) \\
  &= &
  e^{t-t_0}
  \sum_{n=0}^\infty
  ((n-1)\vee 0)! 
  \P(|{\cal T}_t| = n)
  \\
  & &
  \qquad
   \times 
   \sum_{\tau\in\mathbf T^\sharp_n}\E\big[ \left( F_g (\tau)(x_0) \right)_{X_{t-T_n
          +t_0}} \ \! \big| \ |{\cal T}_t| = n,
     {\cal T}_t = \tau, X_{t_0} = i \big]
  \P\left( {\cal T}_t = \tau \mid | {\cal T}_t| = n \right) 
  \\ 
  &= & e^{t-t_0} \E\big[ ((|{\cal T}_t|-1)\vee 0 )! \left( F_g ({\cal T}_t)(x_0)
    \right)_{X_{ t - T_{|\mathcal T_t|} + t_0}}
    \mathbf{1}_{\{T_{|\mathcal T_t|}\le t\}}
   \ \! \big| \ \!  X_{t_0} = i \big]. 
\end{eqnarray*} 
 By the definition \eqref{fg} of $F_g$ and
the bound \eqref{fjlk3-0},
the $q$-$th$ integrability of \eqref{fjklf}, 
 $q\geq 1$, can be controlled by the bound 
\begin{eqnarray} 
  \nonumber
  \lefteqn{
    \! \! \! \! \! \! \! \! \! \! \! \! \! \! \! \! \! \! \! \! \! \! \! \! \! \!
    \E\big[ \big|
      ((|{\cal T}_t|-1)\vee 0 )! \left( F_g ({\cal T}_t)(x_0)
      \right)_{X_{t - T_{|\mathcal T_t|} + t_0}}
      \big|^q \ \! \big| \ \!  X_{t_0} = i \big]
  }
  \\
  \nonumber
   & \le & 
   e^{-(t-t_0)} |x_0|^q
  + \E\big[ \mathbf{1}_{ \{ |{\cal T}_t| \geq 1 \} }
    \big|
    ((|{\cal T}_t|-1)!)^q
    C^{|{\cal T}_t|}
    \big|^q \big]
  \\
  \nonumber
  & = &    e^{-(t-t_0)} |x_0|^q + 
  e^{-(t-t_0)}
  \sum_{n=1}^\infty (n-1)!^q\frac{C^{nq}}{n!}
  (t-t_0)^n, 
\end{eqnarray} 
 which is finite for $q=1$, provided that $C(t-t_0)<1$.
\end{Proof}

\section{Numerical examples}
\label{sec6}
\noindent
In this section we consider numerical
implementations of the Monte Carlo generation of
Butcher trees for problems of the form \eqref{ODE}. 
\begin{enumerate}[i)]
  \item Let $f(y) := e^y$, and consider the equation 
   \begin{equation}
     \label{e1} 
\dot{x}(t) = e^{x(t)}, \quad x(0) = x_0, \quad t_0=0, 
\end{equation} 
 with solution
$$
 x(t) = -\log ( e^{-x_0} - t),
 \qquad
 t\in [0,e^{-x_0}).
   $$
\noindent
 In this case, the moment bound \eqref{jkfl3} is sharp 
 with $C = e^{x_0}$. 
\item
  Let $f(t,y) := yt + y^2$,
 and consider the equation
  \begin{equation}
    \label{e2} 
\dot{x}(t) = t x(t) + x^2(t),  \qquad x(0) = 1/2, \quad t_0=0, 
\end{equation} 
 with solution 
$$
x(t) = \frac{e^{t^2/2}}{2-\int_0^t e^{s^2/2} ds}, 
$$
 see Eq.~(223a) in \cite{butcherbk}. 
\end{enumerate}

\noindent
Table~\ref{table:example 2} displays the growth of
computation times for the command \verb|B[f,t,x0,t0,n]|
applied to \eqref{e1} with $x_0=1$, 
and to \eqref{e2} with $x_0=1/2$, $n=1, \ldots , 8$.  
For the purpose of benchmarking,
all tree generations are performed using Mathematica.
 
\begin{table}[H]
    \centering
    \resizebox{\textwidth}{!}{
      {\begin{tabular}{|c|c|c|c|c|c|c|c|c|c|}
        \hline
        $n$ & 1 & 2 & 3 & 4 & 5 & 6 & 7 & 8 & MC (Geometric) 
        \\
        \hline
        ~Eq. \eqref{e1}, $d=1$~ & ~0s~ & ~0s~ & ~0.1s~ & ~0.1s~ & ~0.4~ & ~0.5s~ & ~3s~ & ~21s~ & ~22s (70K samples)~ 
        \\
        \hline
        Eq. \eqref{e2}, $d=2$ & 0s & 0s & 0s & 0.2s & 1s & 13s & ~222s~ & ~$> 1$h~ & ~164s (10K samples)~ 
        \\
                \hline
      \end{tabular}}
      }
	\caption{Computation times in seconds for \eqref{Butcher} applied
          to \eqref{e1} and \eqref{e2}. 
        }
\label{table:example 2}
\end{table}

\vskip-0.3cm

\noindent
 Figure~\ref{fig5} 
 compares the numerical solutions
 of \eqref{e1} and \eqref{e2}
 by the truncated Butcher series expansion 
$$
 x(t) = \sum_{\tau\in\mathbf T\atop |\tau | \leq n} \frac{(t-t_0)^{|\tau|}}{\tau!\sigma(\tau)} F(\tau)(x_0), \qquad
     t>t_0, 
$$
     denoted by B-$n$, 
     to the probabilistic representation \eqref{random-Butcher},
     for different orders $n\geq 1$. 
 The Monte Carlo estimations of  
 \eqref{random-Butcher} use the geometric distribution with
 respectively $70,000$ and
 $10,000$ samples, see Table~\ref{table:example 2}, so that
 their runtimes are 
 comparable to those of the Butcher series estimates.  
 The solution of \eqref{e1} is estimated using the above codes
 for one-dimensional ODEs,
 and the solution of \eqref{e2} is estimated using
 the multidimensional codes presented in Section~\ref{appendix}, 
 after rewriting the non-autonomous ODE \eqref{e2}
 as a two-dimensional autonomous system. 
 
\begin{figure}[H]
\centering
\begin{subfigure}{.49\textwidth}
\centering
\includegraphics[width=\textwidth]{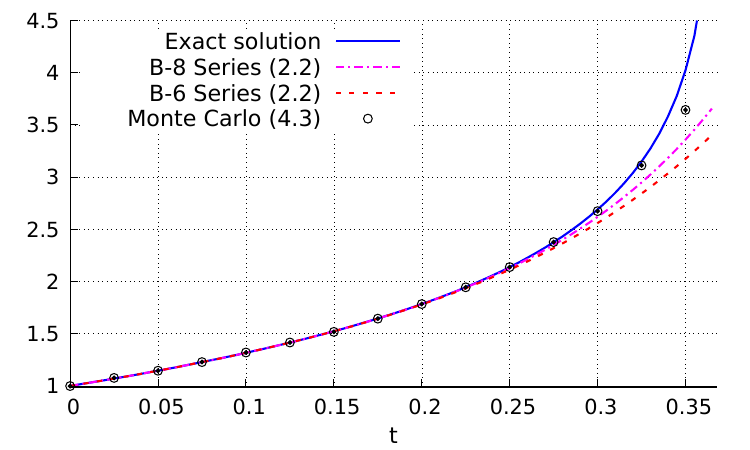}
\vskip-0.1cm
\cprotect\caption{Numerical solutions of
    \eqref{e1} with $x_0=1$.}
\end{subfigure}
\hskip-0.2cm
\begin{subfigure}{.49\textwidth}
\centering
\includegraphics[width=\textwidth]{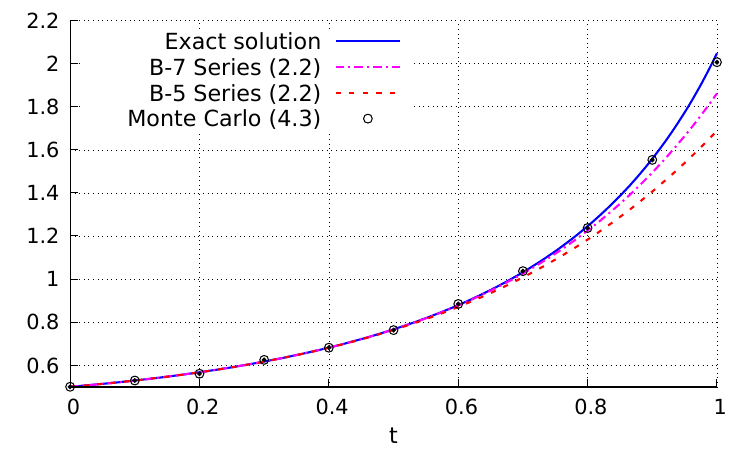}
\vskip-0.1cm
\cprotect\caption{Numerical solutions
    of \eqref{e2} with $x_0=1/2$.}
\end{subfigure}
\caption{Comparisons of
 \eqref{Butcher} {\em vs.} \eqref{random-Butcher}.} 
\label{fig5}
\end{figure}

\vskip-0.3cm

\noindent
 Next, we compare the performance of various probability distributions
$(p_n)_{n\in \inte}$ in terms of variance. 
\subsubsection*{Variance analysis} 
\noindent
$(i)$ {\em Poisson distribution}.
Taking $p_n := \lambda^n e^{-\lambda} / n!$, $n\geq 0$, 
 to be the Poisson distribution with parameter $\lambda > 0$, 
  the variance bound \eqref{jkfl3} is given by the series
$$ 
  \frac{x_0^2}{p_0} +
  \sum_{n=1}^\infty \frac{(C(t-t_0))^{2n}}{n^2 p_n}
  =
  \frac{x_0^2}{p}
  + e^{\lambda} \sum_{n=1}^\infty \left(
  \frac{C^2(t-t_0)}{\lambda} \right)^n \frac{(n-1)!}{n}, 
  $$
  which diverges for all $t>t_0$.
\\
\noindent $(ii)$
{\em Geometric distribution}. 
  Taking $p_n := (1-p)p^n$, $n\geq 0$,
  to be the geometric distribution with success probability $1-p$
  for some $p\in[0,1)$, 
  the variance bound \eqref{jkfl3} is given by the series
$$ 
  \frac{x_0^2}{p_0} +
  \sum_{n=1}^\infty \frac{(C(t-t_0))^{2n}}{n^2 p_n}
  =
  \frac{x_0^2}{1-p} +
  \frac{1}{1-p} \sum_{n=1}^\infty \frac{(C^2(t-t_0)^2/p)^n}{n^2},
  \quad
    t\in [t_0, t_0 + \sqrt{p}/C), 
  $$
  in which case the variance is finite. 
  \\
  \noindent $(iii)$ 
            {\em Optimal distribution}.
            Using the Lagrangian
  $$
  \frac{x_0^2}{p_0}
  + \sum_{n=1}^\infty \frac{(Ct)^{2n}}{n^2 p_n}
  + \zeta \left( 1 - \sum_{n=0}^\infty p_n \right) 
$$
  with multiplier $\zeta$,
  we find that the distribution that minimizes the second moment bound
   \eqref{jkfl3} has the form
   \begin{equation}
     \label{djkls} 
   p_0 = c_0 x_0, \qquad 
   p_n = c_0 \frac{(Ct)^n}{n},
   \qquad n\geq 1,
\end{equation} 
   where $c_0 = ( x_0 - \log ( 1 - C t))^{-1}$ is a normalization constant,
   see Figure~\ref{fig3} in which the moment
   bound \eqref{jkfl3} is plotted as a function
   of $C \in [0,\sqrt{p}]$ with $t=1$ for the distribution
   \eqref{djkls} (lower bound) and for the geometric distributions
   with parameters $p = 0.5 , 0.75$, and
   $x_0=1$. 
   
\begin{figure}[H]
\centering
\begin{subfigure}{.49\textwidth}
\centering
\includegraphics[width=\textwidth]{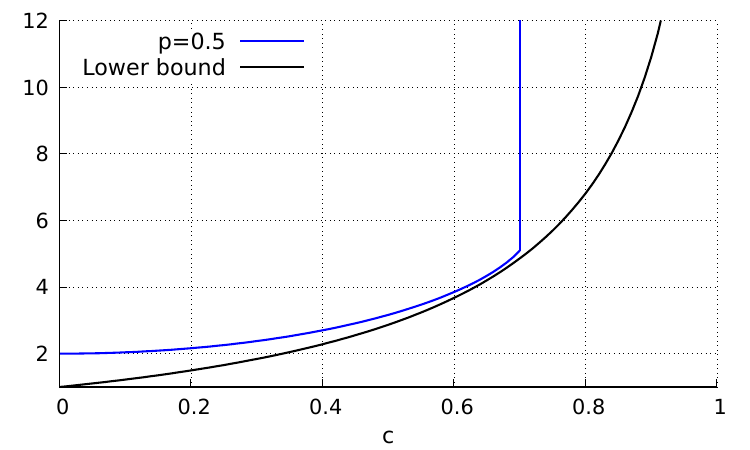}
\vskip-0.1cm
\caption{$p=0.5$.}
\end{subfigure}
\hskip-0.2cm
\begin{subfigure}{.49\textwidth}
\centering
\includegraphics[width=\textwidth]{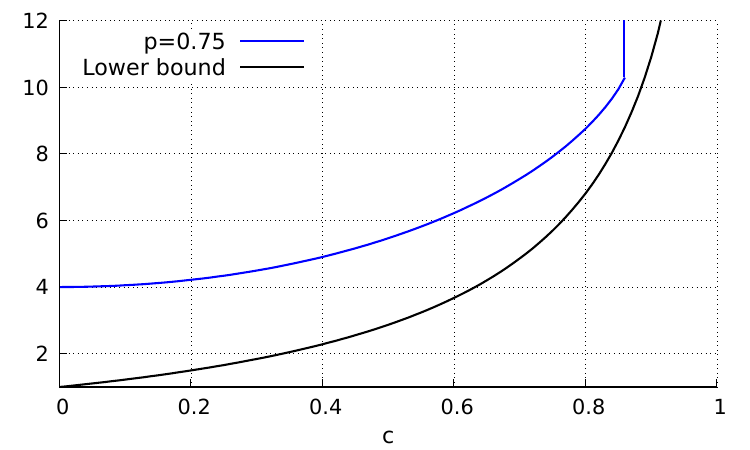}
\vskip-0.1cm
\caption{$p=0.75$.}
\end{subfigure}
\caption{Second moment lower bound.} 
\label{fig3}
\end{figure}

\vskip-0.3cm

\noindent 
 The graphs of Figure~\ref{fig1} are plotted
 using the Poisson and geometric distributions
 with respectively 100,000 and 70,000 Monte Carlo
 samples, in order to match the 22 seconds 
 computation time of Figure~\ref{fig5}-$(a)$ 
 for \eqref{e1}, see Table~\ref{table:example 2}. 
 
\begin{figure}[H]
\centering
\begin{subfigure}{.49\textwidth}
\centering
\includegraphics[width=\textwidth]{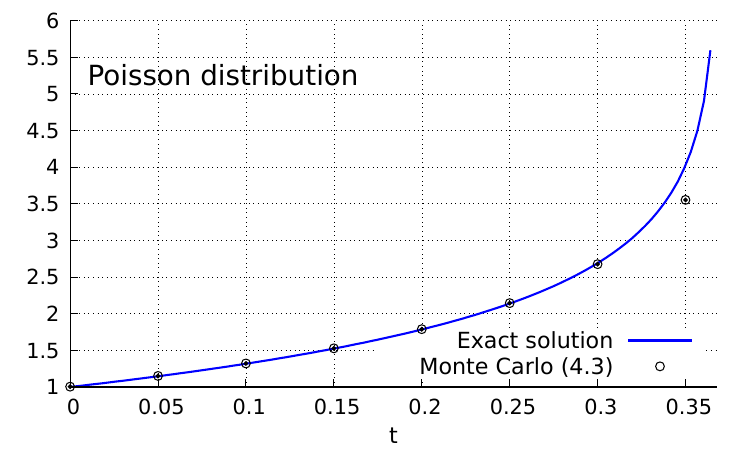}
\vskip-0.1cm
\caption{Poisson tree size.} 
\end{subfigure}
\hskip-0.2cm
\begin{subfigure}{.49\textwidth}
\centering
\includegraphics[width=\textwidth]{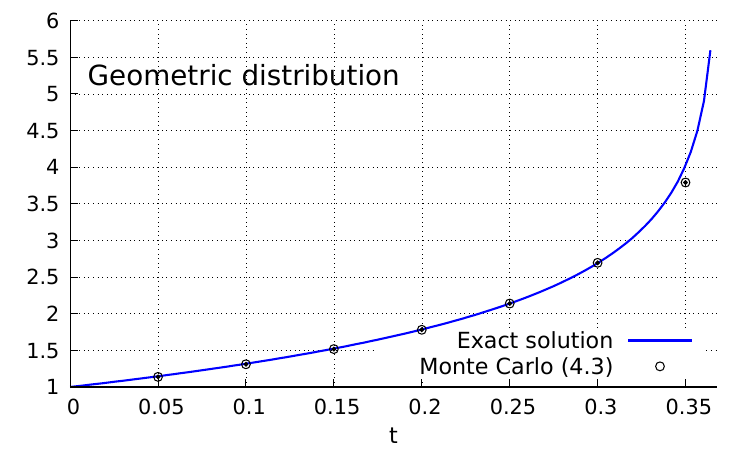}
\vskip-0.1cm
\caption{Geometric tree size.} 
\end{subfigure}
\caption{Numerical solution of \eqref{e1} by the Monte Carlo method \eqref{random-Butcher}.} 
\label{fig1}
\end{figure}

\vskip-0.3cm

\section{Multidimensional codes}
\label{appendix}

\noindent
The next Mathematica code estimates the
Butcher series \eqref{Butcher}
up to a given order $n\geq 1$ in the multidimensional case.
The second component in the output of
\verb|B[f,t,x0,t0,n]| counts 
 the number of trees involved in the Butcher series
truncated up to the order $n\geq 1$. 
  
\smallskip
    
\begin{lstlisting}[language=Mathematica]
  B[f_, t_, x0__, t0_, n_] := (d = Length[x0]; 
   If[n == 0, Return[{x0, 1}], 
    If[n == 1, Return[{x0 + (t - t0)*f[x0], 2}], count = 2; 
     sample = x0 + (t - t0)*f[x0]; 
     g = ConstantArray[Graph[{1 -> 2}], d]; ii = Array[i, n]; 
     For[ii[[1]] = 1, ii[[1]] <= d, ii[[1]]++, 
      g[[ii[[1]]]] = 
       Graph[g[[ii[[1]]]], 
        VertexLabels -> {1 -> D[f[yy], yy[[ii[[1]]]]]}]; 
      g[[ii[[1]]]] = 
       Graph[g[[ii[[1]]]], VertexLabels -> {2 -> f[yy][[ii[[1]]]]}]; 
      m = 1; count += 1; 
      sample += 1/2*(t - t0)^VertexCount[g[[ii[[1]]]]]*
         Product[ff[[2]] , {ff, 
           List @@@ 
            PropertyValue[g[[ii[[1]]]], VertexLabels]}] /. {yy -> 
          x0}]; list = g; 
     While[m <= (n - 2), temp = list; list = {}; 
      Do[l = VertexCount[g]; 
       For[j = 1, j <= l, j++, gg = VertexAdd[g, {l + 1}]; 
        lab = Sort[List @@@ PropertyValue[gg, VertexLabels]][[j]][[
          2]]; For[ii[[l]] = 1, ii[[l]] <= d, ii[[l]]++, 
         gg = Graph[gg, 
           VertexLabels -> {l + 1 -> f[ yy][[ii[[l]]]]}];
         gg = Graph[gg, VertexLabels -> {j -> D[lab, yy[[ii[[l]]]]]}];
          gg = EdgeAdd[gg, j -> l + 1]; 
         GraphPlot[gg, 
          PlotStyle -> {FontSize -> 20, FontColor -> Red}]; 
         count += 1; 
         sample += (t - t0)^(l + 1)/(l + 1)!*
            Product[ff[[2]] , {ff, 
              List @@@ PropertyValue[gg, VertexLabels]}] /. {yy -> 
             x0}; list = Append[list, gg]]], {g, temp}]; m++]; 
     Return[{sample, count}]]]);
x0 = {0, 0.5}; t0 = 0; t1 = 1.3885; 
f[y__] := {1, y[[1]]*y[[2]] + y[[2]]^2}
B[f, t, x0, t0, 4]
\end{lstlisting}
 
\noindent
The next Mathematica code generates a single random Butcher tree sample in \eqref{random-Butcher} for a multidimensional ODE. 
  
\smallskip
    
\begin{lstlisting}[language=Mathematica]
  MCsample[f_, t_, x0_, t0_, dist_] := (d = Length[x0]; 
   n = RandomVariate[dist]; 
   If[n == 0, Return[x0/PDF[dist, 0]], 
    If[n == 1, Return[(t - t0)*f[x0]/PDF[dist, 1]], 
     g = ConstantArray[Graph[{1 -> 2}], d]; ii = Array[i, n]; 
     sample = 0; 
     For[ii[[1]] = 1, ii[[1]] <= d, ii[[1]]++, 
      g[[ii[[1]]]] = 
       Graph[g[[ii[[1]]]], 
        VertexLabels -> {1 -> D[f[yy], yy[[ii[[1]]]]]}]; 
      g[[ii[[1]]]] = 
       Graph[g[[ii[[1]]]], VertexLabels -> {2 -> f[yy][[ii[[1]]]]}]; 
      sample += (t - t0)^2/PDF[dist, 2]/2*
         Product[ff[[2]] , {ff, 
           List @@@ 
            PropertyValue[g[[ii[[1]]]], VertexLabels]}] /. {yy -> 
          x0}]; If[n == 2, Return[sample]]; sample = 0; list = g; 
     m = 1; While[m <= (n - 2), temp = list; list = {}; 
      Do[l = VertexCount[g]; 
       j = RandomVariate[DiscreteUniformDistribution[{1, l}]]; 
       gg = VertexAdd[g, {l + 1}]; 
       lab = Sort[List @@@ PropertyValue[gg, VertexLabels]][[j]][[2]];
        For[ii[[l]] = 1, ii[[l]] <= d, ii[[l]]++, 
        gg = Graph[gg, VertexLabels -> {l + 1 -> f[ yy][[ii[[l]]]]}];
        gg = Graph[gg, VertexLabels -> {j -> D[lab, yy[[ii[[l]]]]]}]; 
        gg = EdgeAdd[gg, j -> l + 1]; 
        GraphPlot[gg, 
         PlotStyle -> {FontSize -> 20, FontColor -> Red}]; 
        If[m == (n - 2), 
         sample += 
          Product[ff[[2]] , {ff, 
             List @@@ PropertyValue[gg, VertexLabels]}] /. {yy -> 
             x0}]; list = Append[list, gg]], {g, temp}]; m++]; 
     Return[sample*(t - t0)^n/PDF[dist, n]/n]]]);
x0 = {0, 0.5}; t0 = 0; t1 = 1.3885; 
f[y__] := {1, y[[1]]*y[[2]] + y[[2]]^2}
MCsample[f, t, x0, t0, GeometricDistribution[0.5]] 
\end{lstlisting}

\footnotesize

\def\cprime{$'$} \def\polhk#1{\setbox0=\hbox{#1}{\ooalign{\hidewidth
  \lower1.5ex\hbox{`}\hidewidth\crcr\unhbox0}}}
  \def\polhk#1{\setbox0=\hbox{#1}{\ooalign{\hidewidth
  \lower1.5ex\hbox{`}\hidewidth\crcr\unhbox0}}} \def\cprime{$'$}

\end{document}